\theoremstyle{definition}
\newtheorem{thm}{Theorem}[section]
\newtheorem{assumption}{Assumption}[section]
\newtheorem{defn}[thm]{Definition}
\newtheorem{prop}[thm]{Proposition}
\newtheorem{cor}[thm]{Corollary}
\newtheorem{example}[thm]{Example}
\newtheorem{proposition}[thm]{Proposition}
\newcommand{\VV}{\mathbf{V}}
\newcommand{\xx}{\mathbf{x}}
\newcommand{\yy}{\mathbf{y}}
\newcommand{\pp}{\mathbf{p}}
\renewcommand{\aa}{\mathbf{a}}
\newcommand{\bb}{\mathbf{b}}
\newcommand{\XX}{\mathbf{X}}
\newcommand{\R}{\mathbf{R}}
\newcommand{\I}{\mathbf{I}}
\newcommand{\tran}{\mathbf{t}}
\newcommand{\al}{\boldsymbol{\alpha}}
\newcommand{\be}{\boldsymbol{\beta}}
\newcommand{\rr}{\mathbf{r}}
\renewcommand{\t}{\mathbf{t}}
\newcommand{\ZZ}{\mathbb{Z}}
\newcommand{\QQ}{\mathbb{Q}}
\newcommand{\RR}{\mathbb{R}}
\newcommand{\CC}{\mathbb{C}}
\newcommand{\PP}{\mathbb{P}}
\newcommand{\Gcon}{G_{\textrm{con}}}
\newcommand{\Gdis}{G_{\textrm{dis}}}
\definecolor{darkpastelgreen}{rgb}{0.01, 0.75, 0.24}
\DeclareMathOperator{\supp}{\rm supp}
\DeclareMathOperator{\SO}{\rm SO}
\DeclareMathOperator{\SE}{\rm SE}
\newcommand{\mon}{\mathrm{Mon}}
\newcommand{\deck}{\mathrm{Deck}}
\newcommand{\SOCC}{\SO_\CC}
\newcommand{\SECC}{\SE_\CC}
\newcommand{\SERR}{\SE_\RR}
\newcommand{\func}[3]{#1 \colon #2 \rightarrow #3}
\renewcommand{\function}[5]{\begin{align*} #1 \colon #2 &\rightarrow #3 \\ #4 &\mapsto #5 \end{align*}}
\newcommand{\rat}[3]{#1 \colon #2 \dashrightarrow #3}
\newcommand\restr[2]{{% we make the whole thing an ordinary symbol
  \left.\kern-\nulldelimiterspace % automatically resize the bar with \right
  #1 % the function
  \vphantom{\big|} % pretend it's a little taller at normal size
  \right|_{#2} % this is the delimiter
}}
\newcommand{\xdashrightarrow}[2][]{\ext@arrow 0359\rightarrowfill@@{#1}{#2}}
\newcommand\cycle[2][\;\;]{%
  \readlist\thecycle{#2}%
  (\foreachitem\i\in\thecycle{\ifnum\icnt=1\else#1\fi\i})%
}
\newcommand{\cam}[2]{[#1\mid #2]}
\newcommand{\wt}[1]{\widetilde{#1}}
\renewcommand{\matrix}[1]{\begin{bmatrix} #1 \end{bmatrix}}
\definecolor{blue(pigment)}{rgb}{0.2, 0.2, 0.8}
\newcommand{\uu}{\mathbf{u}}
\newcommand{\xdasharrow}[2][->]{
% correct vertical setting by egreg:
% http://tex.stackexchange.com/a/59660/13304
\tikz[baseline=-\the\dimexpr\fontdimen22\textfont2\relax]{
\node[anchor=south,font=\scriptsize, inner ysep=1.5pt,outer xsep=2.2pt](x){#2};
\draw[shorten <=3.4pt,shorten >=3.4pt,dashed,#1](x.south west)--(x.south east);
}
}
\newcommand{\ceil}[1]{\left\lceil #1 \right\rceil}
\crefname{algocf}{alg.}{algs.}
\Crefname{algocf}{Algorithm}{Algorithms}
\newcommand{\str}{\,{\odot}\,\,}
\DeclarePairedDelimiter{\norm}{\lVert}{\rVert}
\DeclarePairedDelimiter{\normsq}{\lVert}{\rVert^2}
\journal{Journal of Symbolic Computation}
\begin{document}

\begin{frontmatter}

%% Title, authors and addresses

%% use the tnoteref command within \title for footnotes;
%% use the tnotetext command for theassociated footnote;
%% use the fnref command within \author or \address for footnotes;
%% use the fntext command for theassociated footnote;
%% use the corref command within \author for corresponding author footnotes;
%% use the cortext command for theassociated footnote;
%% use the ead command for the email address,
%% and the form \ead[url] for the home page:
%% \title{Title\tnoteref{label1}}
%% \tnotetext[label1]{}
%% \author{Name\corref{cor1}\fnref{label2}}
%% \ead{email address}
%% \ead[url]{home page}
%% \fntext[label2]{}
%% \cortext[cor1]{}
%% \affiliation{organization={},
%%             addressline={},
%%             city={},
%%             postcode={},
%%             state={},
%%             country={}}
%% \fntext[label3]{}

%% use optional labels to link authors explicitly to addresses:
%% \author[label1,label2]{}
%% \affiliation[label1]{organization={},
%%             addressline={},
%%             city={},
%%             postcode={},
%%             state={},
%%             country={}}
%%
%% \affiliation[label2]{organization={},
%%             addressline={},
%%             city={},
%%             postcode={},
%%             state={},
%%             country={}}

\title{Using monodromy to recover symmetries of polynomial systems}

\author[label1]{Timothy Duff}
\affiliation[label1]{
  organization={University of Washington},
  city={Seattle},
  state={Washington},
  country ={USA},
  postcode = {98195-4350}
}

\author[label2]{Viktor Korotynskiy}
\affiliation[label2]{
  institution={CIIRC, CTU},
  city={Prague},
  country={Czech Republic}
}
\author[label2]{Tomas Pajdla}
%\affiliation[label2]{
%  institution={CIIRC, CTU in Prague},
%  city={Prague},
%  country={Czech Republic}
%}
\author[label3]{Margaret H. Regan}
\affiliation[label3]{
  institution={College of the Holy Cross},
  city={Worcester},
  state={Massachusetts},
  country={USA},
  postcode={01610-2395}
}

\begin{abstract}
Galois/monodromy groups attached to parametric systems of polynomial equations provide a method for detecting the existence of symmetries in solution sets.
Beyond the question of existence, one would like to compute formulas for these symmetries, towards the eventual goal of solving the systems more efficiently.
We describe and implement one possible approach to this task using numerical homotopy continuation and multivariate rational function interpolation.
We describe additional methods that detect and exploit \emph{a priori} unknown quasi-homogeneous structure in symmetries.
These methods extend the range of interpolation to larger examples, 
including applications with nonlinear symmetries drawn from vision and robotics.
\end{abstract}

%%Graphical abstract
%\begin{graphicalabstract}
%\includegraphics{grabs}
%\end{graphicalabstract}

%%Research highlights
%\begin{highlights}
%\item Research highlight 1
%\item Research highlight 2
%\end{highlights}

%\begin{keyword}
%% keywords here, in the form: keyword \sep keyword

%% PACS codes here, in the form: \PACS code \sep code

%% MSC codes here, in the form: \MSC code \sep code
%% or \MSC[2008] code \sep code (2000 is the default)

%\end{keyword}

\end{frontmatter}

%% \linenumbers

%% main text
\section{Introduction}\label{sec:intro}

Structured systems of nonlinear equations appear frequently in applications like computer vision and robotics.
Although the word ``structure'' can be interpreted in many ways, one of its aspects that is strongly connected to the complexity of solving is the \emph{algebraic degree} of the problem to be solved.
In many contexts, this may simply refer to the number of solutions of a system (usually counted over the complex numbers).
However, if we adopt this definition without scrutiny, we may fail in certain special cases to detect additional structure such as symmetry.

% \begin{figure}
%  \begin{center}
%  \begin{tabular}{lr}
%  \resizebox{0.45\textwidth}{!}{\includegraphics{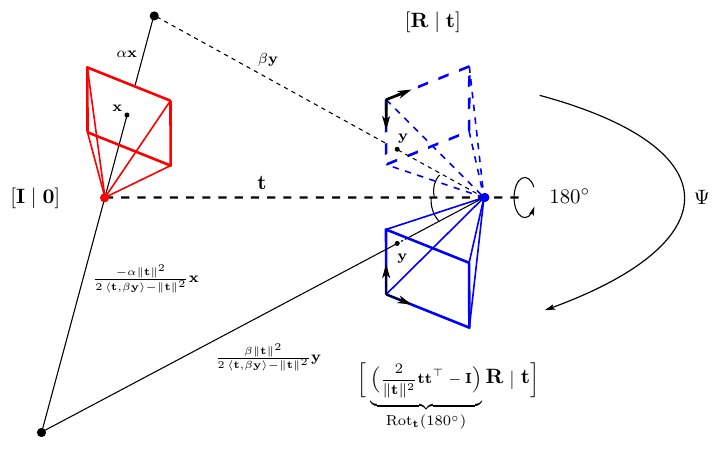}}
%  &
%  \resizebox{0.45\textwidth}{!}{\includegraphics{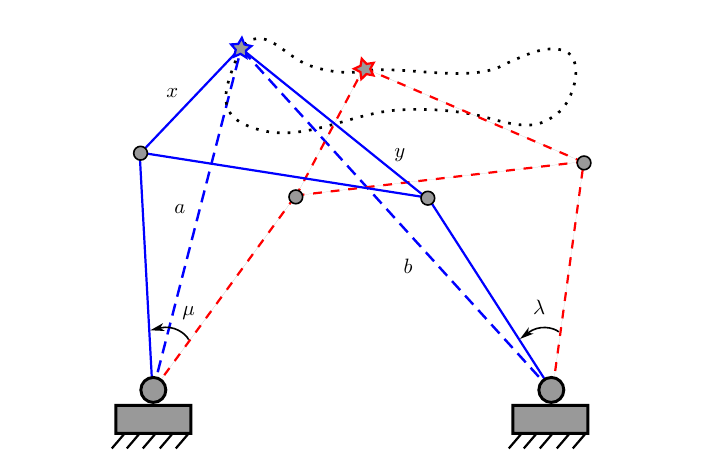}} 
%  \end{tabular}
%  \end{center}
%   \caption{(Left) Twisted pair symmetry for five-point relative pose (see~\Cref{subsec:five-point-problem}.)
%   (Right) Nine-point four-bar mechanism synthesis (see~\Cref{subsec:alts-problem}.)} 
%   \label{fig:teaser}
% \end{figure}

To answer more refined questions involving structure, one can often consider a \emph{Galois/monodromy} group naturally associated to the problem of interest.
In this case, ``problem'' refers to a \emph{parametric family} of problem instances which must be solved for different sets of parameter values.
In our work, we are primarily interested in \emph{geometric} Galois groups arising from algebraic extensions of functions fields of varieties over the complex numbers.\footnote{See eg.~\cite[\S 1.2]{galois-survey} for a discussion of how other fields of definition relate to this setup.}

Currently, a number of heuristic methods for computing Galois/monodromy groups using numerical homotopy continuation methods have been proposed and implemented, eg.~\cite{NumGalois,duff-monodromy}.
It is also fairly well-understood how Galois/monodromy groups encode important structural properties such as \emph{decomposability}, or the existence of problem symmetries which may be expressed as rational functions known as \emph{deck transformations.}
Thus, Galois/monodromy group computation provides us a useful toolkit for detecting the \emph{existence} of special structure.
However, one key challenge remains: once we know that our problem \emph{does} have such special structure, can we use this information to solve systems more efficiently?

Our work focuses on a natural first step towards addressing this challenge: \emph{given the data of a numerical Galois/monodromy group computation, can we recover formulas for the deck transformations?}

In this paper, we describe and implement a novel framework that solves this problem, namely recovering symmetries.
Our framework combines  techniques for numerically computing Galois/monodromy groups with a new scheme for floating-point interpolation of multivariate rational functions which represent the underlying symmetries. 
This leads to an improvement upon the prior interpolation techniques described in~\cite{us_ISSAC23}.  
While the novelty of~\cite{us_ISSAC23} was the combination of monodromy techniques with interpolation, this paper provides a substantial new contribution in the form of expanding the interpolation methods to detect a quasi-homogeneous structure and exploit this when interpolating (see Section~\ref{sec:symmetries}).  As a result, previously intractable problems are now solvable.  For example, improvements are made on the five-point relative pose problem from~\cite{us_ISSAC23} (see Section~\ref{subsec:five-point-problem}). 
New problems, such as the Perspective-3-Point (P3P) and radial camera relative pose, can also now be solved, as described in Section~\ref{subsec:p3p} and Section~\ref{subsec:radial}, respectively.

%Although both components are well-established within the domain of symbolic-numeric computation, we are unaware of any previous work which combines them in this novel way. \maggie{issac has been expanded to detect this quasi homotgenous structure to then exploit this for the interpolation problems}

%\maggie{add sentences that highlight the new examples we are able to solve that were previously not tractable with FP interpolation.}

%\tim{
%labels for new material sections commented out below
%{sec:symmetries}
%{subsec:implementation}
%{subsec:five-point-problem}
%{subsubsec:5pp-quasihomog}
%{subsec:p3p}
%{subsec:radial}
%}

In~\Cref{sec:previous-works}, we provide some context for our approach by considering related previous works.
In~\Cref{sec:background}, we establish terminology and useful background facts.
In~\Cref{sec:basic-method}, we describe our a basic approach to interpolating deck transformations and illustrate it on simple examples. 
In~\Cref{sec:symmetries}, we describe a more sophisticated variant of this approach, which exploits the quasi-homogeneous structure of certain symmetries to reduce the size of the associated linear algebra subproblems. 
In~\Cref{sec:ex}, we describe implementation of our accompanying software package \texttt{DecomposingPolynomialSystems} for the Julia programming language~\cite{julia}, along with experiments on example problems drawn from engineering.
The source code for this package may be obtained at the url below:
\textcolor{magenta}{\url{https://github.com/MultivariatePolynomialSystems}}.

\section{Related work}\label{sec:previous-works}

Galois/monodromy groups have long had a presence in algebraic computation, used as a tool in the study of algebraic curves, polynomial factorization, and numerical irreducible decomposition~\cite{GalligoPoteaux,VanHoeij,NID}.
In recent years, monodromy-based methods have become a popular heuristic for computing the isolated solutions of parametric polynomial systems~\cite{duff-monodromy, Critical}.
One appealing aspect of these methods is that they are useful for constructing efficient \emph{start systems} to be used in parameter homotopies, particularly in cases where more traditional start systems (total degree, polyhedral) fail to capture the full structure.
Another appealing feature is that symmetry or decomposability can be naturally incorporated in both the offline monodromy and online parameter homotopy phases.
This is the main idea behind several recent, closely-related works which use \emph{a priori} knowledge of symmetries to speed up solving~\cite{Amendola,yahl}.
In contrast to these works, our approach recovers symmetries with no such knowledge, and with limited assumptions on the system to be solved.
Our work is also a natural continuation of the paper~\cite{GaloisComputerVision}, where Galois/monodromy groups were used to infer decompositions and symmetries that were not previously known for some novel problems in computer vision.
Here, our emphasis is a novel \emph{method}, illustrated on a variety of examples.

Interpolation is a well-studied problem in symbolic-numeric computation and an important ingredient for solving our recovery problem.
In our work, we are faced with the difficult task of interpolating an \emph{exact} rational function (as opposed to some low-degree approximation) from \emph{inexact inputs} in double-precision floating-point arithmetic.
For this reason, we employ many heuristics, and make no attempt to match state-of-the-art interpolation techniques.
On the other hand, we hope that experts on interpolation will view our particular application as a potential use case for their own methods.
Some relevant references for the specific problem of multivariate rational function interpolation include~\cite{DBLP:conf/issac/KaltofenY07,DBLP:journals/tcs/CuytL11,DBLP:journals/cca/HoevenL21}.

Our focus on inexact inputs is due to the fact that interpolation occurs downstream of numerical homotopy continuation in our framework.
This is also why we cannot pick inputs for the interpolation problem arbitrarily.
With that said, we point out that assuming exact inputs could also be relevant if, say, certified homotopy continuation (see~\cite{xu-burr-yap,berltran-leykin,hauenstein-liddell-haywood,vdH:homotopy}) is used, augmented by some additional postprocessing.

%\tim{todo: update with any related work -- Hubert, Larsson, smith normal form to graded approach (known), novelty: using it for both the variables and parameters}

Additional methodology introduced in this version of the paper relies on detecting structure that was \emph{a priori} unknown quasi-homogeneous structure. 
These techniques rely on discovering scaling symmetries by computing the Smith Normal Form of an integer matrix, a well-studied computational problem. Detecting such scaling symmetries with the integer linear algebra techniques has been done before in multiple different contexts~\cite{yahl2,Larssonsymm,Corless,Hubert1,Hubert2}. 
The advances in this paper highlight the use of the Smith Normal Form for both the variables and parameters, instead of simply the variables.

\section{Background}\label{sec:background}

In this work, we are interested in solving polynomial systems whose solutions correspond to points in a generic fiber of a branched cover of complex algebraic varieties.
Here we collect some definitions and theoretical facts that we need to work within this framework.
The section concludes with~\Cref{prop:deck_via_paths} and~\Cref{cor:deck_after_tracking}, which justify the correctness of our general interpolation setup used in~\Cref{sec:basic-method,sec:symmetries}.

\begin{defn}\label{def:branched-cover}
Let $X$ and $Z$ be irreducible algebraic varieties of dimension $m$ over the complex numbers.
A \emph{branched cover} is a dominant, rational map $f:X \dashrightarrow Z$.
The varieties $X$ and $Z$ are called the total space and the base space of the cover, respectively.
The number of (reduced) points in the preimage over a generic $z \in Z$ is called the degree of $f,$ denoted $\deg(f).$
\end{defn}

Essentially, the base space $Z$ in~\Cref{def:branched-cover} can be thought of as a space of parameters or observations.
The fiber $f^{-1} ( z)$ over some particular $z\in Z$ should usually be understood as the solutions of a particular problem instance specified by $z.$
Oftentimes, $Z $ may be assumed to be an affine space $\CC^m$, and in this case we write $\pp \in \CC^m$ for parameter values.
The assumptions that $f$ is dominant and $\dim X = m$ imply that there is a finite, nonzero number of solutions for almost all parameters.
Counting solutions over $\CC, $ that number is $\deg(f).$
Additionally, the total space $X$ is often either
\begin{enumerate}
\item an irreducible variety consisting of \emph{problem-solution pairs}, 
\begin{equation}\label{eq:ps-pair-space}
X = \{ (\xx, \pp) \in \CC^{n+m} \mid f_1 (\xx , \pp ) = \cdots = f_k (\xx ,\pp ) = 0 \}
\end{equation}
for some system of polynomials $f_1, \ldots , f_k \in \CC [\xx , \pp ]$, with projection $f: X \to \CC^m$ given by $f(\xx , \pp ) = \pp ,$ or
\item an affine space of unknowns $X = \CC^m$, and $f: \CC^m \dashrightarrow \CC^m $.
\end{enumerate}

Cases (1) and (2) for the total space $X$ given above are closely related.
Indeed, (2) reduces to (1) if we take $X$ to be the graph of $f.$
Conversely, it can often be the case that the variety $X$ has a unirational parametrization $p: \CC^m \dashrightarrow X.$
In this case, (1) reduces to (2) by replacing $f$ with the branched cover $f \circ p : \CC^m \dashrightarrow \CC^m .$
When $\deg(f)$ and $\deg(p)$ are both greater than $1,$ the composite map $f \circ p$ is an example of a decomposable branched cover.

\begin{defn}\label{def:decomposable-branched-cover}
A branched cover $f: X \dashrightarrow Z$ is said to be \emph{decomposable} if there exist two branched covers $g : X \dashrightarrow Y$ and $h : Y \dashrightarrow Z$  with $\deg(g), \deg(h) < \deg(f)$ such that $f (x) = h \circ  g (x)$ for all $x$ in a nonempty Zariski-open subset of $X.$
The maps $g$ and $h$ give a \emph{decomposition} of $f.$
\end{defn}

\begin{example}\label{ex:palindromic-sextic}
Let $X = \VV ( a x ^6 + b x^5 + c x^4  + d x^3 + c x^2 + b x + a) \subset \CC^5$, $Z= \CC^4,$ and $f : X \to Z$ given by $f(a,b,c,d,x) = (a,b,c,d).$
The projection $f$ is a decomposable branched cover in the sense of~\Cref{def:decomposable-branched-cover}.
To see this, take $Y = \VV (a (y^3 - 3y) + b (y^2 - 2) + c y + d) \subset \CC^5,$ and define $g: X \dashrightarrow Y$ by $g(a,b,c,d,x) = (a,b,c,d, \frac{x^2+1}{x}),$ and $h : Y \rightarrow Z$ by $h(a,b,c,d,y) = (a,b,c,d).$
The degrees of maps satisfy $6 = \deg(f) = \deg (h \circ g) = \deg (h) \cdot \deg (g) = 3 \cdot 2.$
\end{example}

\begin{example}\label{ex:sparse-triangular-system}
The following example is based on~\cite[\S 2.3.2]{yahl}, and belongs to a general class of examples where decomposability can be detected via equations' Newton polytopes. 
Let $Z = \CC^{23},$ and $X\subset  \CC^{26}$ be the vanishing locus of the three equations below:
\begin{align*}
a\,x^{3}y\,z^{4}+b\,x^{2}y^{2}z^{4}+c\,x^{2}y\,z^{3}+d\,x\,y^{2}z^{3}+e\,x^{2}z^{2}+f\,x\,y\,z^{2}+g\,x\,z+h,\\ 
i\,x^{3}y\,z^{4}+j\,x^{2}y^{2}z^{4}+k\,x^{2}y\,z^{3}+l\,x\,y^{2}z^{3}+m\,x^{2}z^{2}+n\,x\,y\,z^{2}+o\,x\,z+p,\\
q\,x\,y\,z^{4}+r\,y\,z^{5}+s\,x\,z^{3}+t\,z^{4}+u\,z^{3}+v\,z^{2}+w.
\end{align*}
The projection $f: X \rightarrow \CC^{23}$ given by $f(a, \ldots , z) \mapsto (a, \ldots , w)$ is a branched cover of degree $32.$
If we let $Y$ be the set of all $\left(a,\ldots , w, \hat{x}, \hat{y}\right)\in   \CC^{25}$ such that
\begin{align*}
a\,\hat{x}^{3}\hat{y}+b\,\hat{x}^{2}\hat{y}^{2}+c\,\hat{x}^{2}\hat{y}+d\,\hat{x}\,\hat{y}^{2}+e\,\hat{x}^{2}+f\,\hat{x}\,\hat{y}+g\,\hat{x}+h &=\\ i\,\hat{x}^{3}\hat{y}+j\,\hat{x}^{2}\hat{y}^{2}+k\,\hat{x}^{2}\hat{y}+l\,\hat{x}\,\hat{y}^{2}+m\,\hat{x}^{2}+n\,\hat{x}\,\hat{y}+o\,\hat{x}+p &= 0,
\end{align*}
then $g : X \to Y$ given by $g(a, \ldots , w, x,y,z)  = (a, \ldots , w, xz, yz)$ and $h : Y \to Z$ given by $h(a, \ldots , w, \hat{x}, \hat{y}) = (a, \ldots , w)$ show that $f$ is decomposable in the sense of~\Cref{def:decomposable-branched-cover}.
Here we have $\deg (h) = 8$ and $\deg (g) = 4.$
\end{example}

The Galois/monodromy group is an invariant that allows us to decide whether or not a branched cover is decomposable, without actually exhibiting a decomposition.
We recall the basic definitions here.
For a branched cover $f:X \dashrightarrow Z,$ fix a dense Zariski-open subset $U \subset Z$ such that $f^{-1} (z)$ consists of $d=\deg (f)$ points.
Over a regular locus, the branched cover $f$ restricts to a $d$-sheeted covering map in the usual sense given by $f^{-1} (U) \to U.$
For any basepoint $z\in U,$ we may construct via \emph{path-lifting} a group homomorphism from the fundamental group $\pi_1 (U ; z)$ to the symmetric group $S_d$. 

More precisely, if $\gamma : [0,1] \to U$ is any map that is continuous with resepct to the Euclidean topology, then the \emph{unique lifting property}~\cite[Prop.~1.34]{hatcher} implies that there are precisely $d$ continuous lifts $\tilde{\gamma}_1, \ldots , \tilde{\gamma}_d : [0,1] \to \pi^{-1} (U)$ satisfying $f\circ \tilde{\gamma}_i (t) = \gamma_i (t)$ for all $i=1, \ldots , d$ and $t\in [0,1].$ 
In particular, $\tilde{\gamma}_i (0), \tilde{\gamma}_i (1) \in f^{-1} (z)$, and there is a permutation $\sigma_{\gamma }$ that sends each $\tilde{\gamma}_i(1)$ to $\tilde{\gamma}_i(0).$
One may check that this permutation is independent of the chosen representative $\gamma $ of the homotopy class $[\gamma ]\in \pi_1 (U; z).$
Thus, for our chosen $U$ and $z$ we may define the \emph{monodromy representation},
\begin{align}
\rho_{u,Z} : \pi_1 (U ; z) &\to S_d \label{eq:monodromy-rep}\\
[\gamma ] &\mapsto \sigma_\gamma . \nonumber
\end{align}
This gives a group homomorphism, whose image is a subgroup of $S_d,$ which turns out to be independent of the choice of $U$ and $z.$ 

\begin{defn}\label{def:gm-group}
The \emph{Galois/monodromy group} of a branched cover $f$ is
the subgroup of $S_d$ given by the image of the map~\eqref{eq:monodromy-rep}.
\end{defn}

The abstract structure of the Galois/monodromy group, although interesting, is not our main focus.
Instead, we will be mainly interested in the action of this group given by~\eqref{eq:monodromy-rep}.
Since $X$ is irreducible, this action is transitive (see eg.~\cite[Lemma 4.4, p87]{Miranda}).

The monodromy action also provides a clean characterization of decomposable branched covers.
Recall that the action of a group $G$ on a finite set $B$ is said to be \emph{imprimitive} if there exists a nontrivial partition $B = B_1 \sqcup B_2 \sqcup \cdots \sqcup B_k$ such that for any $g\in G$ and $B_i$ there exists a $B_j $ with $g\cdot B_i = B_j.$
If $B$ has $d$ elements and $G$ is a finite, transitive subgroup of $S_d$, it follows that the subsets $B_i$ must all have the same size. 
The sets $B_1, \ldots ,B_k$ are called \emph{blocks} of the imprimitive action, and are said to form a \emph{block system}.

\begin{proposition}\label{prop:decomposability}
(See eg.~\cite[Proposition 1]{yahl}.)
A branched cover is decomposable if and only if its Galois/monodromy group is imprimitive.
\end{proposition}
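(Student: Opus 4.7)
The plan is to handle the two directions separately, with the backward direction relying on a Galois-theoretic correspondence between intermediate fields and intermediate covers.

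For the forward implication, I would assume $f = h \circ g$ with $g \colon X \dashrightarrow Y$ and $h \colon Y \dashrightarrow Z$ of degrees strictly between $1$ and $d = \deg (f)$. Pick a generic $z \in U \subset Z$ lying in a common unramified locus, write $h^{-1}(z) = \{y_1, \ldots, y_k\}$ with $k = \deg (h)$, and define $B_i = g^{-1}(y_i) \cap f^{-1}(z)$; each $B_i$ has $\deg (g)$ elements, so the $B_i$ form a nontrivial partition of $f^{-1}(z)$. To check that monodromy preserves this partition, I would apply the unique lifting property twice to any loop $\gamma$ based at $z$: first lift $\gamma$ to $k$ paths $\widetilde{\gamma}_i^Y$ in $Y$ starting at the $y_i$ via $h$, and then lift each $\widetilde{\gamma}_i^Y$ to $\deg (g)$ paths in $X$ via $g$. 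By uniqueness of lifts via $f = h \circ g$, the $d$ paths thus produced coincide with the direct $f$-lifts of $\gamma$. Consequently the permutation $\sigma_\gamma$ carries $B_i$ to $B_{\sigma^Y_\gamma (i)}$ where $\sigma^Y_\gamma$ is the monodromy of $\gamma$ for $h$, so $\{B_1, \ldots, B_k\}$ is a block system for the monodromy action on $f^{-1}(z)$.

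For the backward implication, suppose the monodromy group $G$ acts imprimitively on $f^{-1}(z_0)$, and let $B$ be the (unique) block containing a chosen basepoint $x_0$. Set $H_0 = \stab_G (x_0)$ and $H = \{\sigma \in G : \sigma \cdot B = B\}$; transitivity together with imprimitivity give $H_0 \subsetneq H \subsetneq G$. Identifying $G$ with the Galois group of the Galois closure of the function field extension $\CC (X) / \CC (Z)$ acting on embeddings of $\CC (X)$, subgroups of $G$ containing $H_0$ correspond bijectively to intermediate fields $\CC (Z) \subseteq K \subseteq \CC (X)$. The subgroup $H$ then yields an intermediate field with $[\CC (X) : K] = |B|$ and $[K : \CC (Z)] = d / |B|$, both strictly between $1$ and $d$. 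Choosing any normal projective variety $Y$ with $\CC (Y) \cong K$, the inclusions of function fields induce dominant rational maps $g \colon X \dashrightarrow Y$ and $h \colon Y \dashrightarrow Z$ with $h \circ g = f$ and degrees as required, yielding a decomposition in the sense of~\Cref{def:decomposable-branched-cover}.

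The forward direction is essentially bookkeeping on path lifts. The main obstacle is the backward direction, specifically in cleanly invoking the correspondence between subgroups $H_0 \subseteq H \subseteq G$ and intermediate covers. A fully self-contained argument has two standard routes: (a) cite the equivalence of categories between finite separable extensions of $\CC (Z)$ and finite branched covers of $Z$ (essentially Riemann's existence theorem and its higher-dimensional analogues), which immediately produces $Y$ from the field $K$; or (b) build $Y$ topologically as the quotient of $f^{-1}(U) \to U$ by the monodromy-invariant equivalence relation determined by the block system, then algebraize the resulting analytic cover via GAGA. In either approach, $Y$ is only canonically determined up to birational equivalence, which is harmless since we are working in the category of rational maps.
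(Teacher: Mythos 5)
The paper does not actually prove this proposition; it is quoted with a citation to \cite[Proposition~1]{yahl}, so there is no in-paper argument to compare against. Your proof is the standard one and is correct in both directions. The forward direction (fibers of $g$ over $h^{-1}(z)$ form a block system, preserved by monodromy via uniqueness of lifts through $f = h\circ g$) is exactly the usual bookkeeping and is fine. For the backward direction, the one substantive input you are leaning on — and should flag explicitly — is the identification of the path-lifting monodromy group of Definition~\ref{def:gm-group} with the Galois group of the Galois closure of $\CC(X)/\CC(Z)$ acting on the $d$ embeddings of $\CC(X)$; this is a classical but nontrivial comparison theorem (due in this form to Harris), and without it the Galois correspondence between subgroups $H_0\subseteq H\subseteq G$ and intermediate fields does not directly apply to the topologically defined group. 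Granting that identification, the rest goes through cleanly: $K = L^H$ satisfies $[\CC(X):K]=|B|$ and $[K:\CC(Z)]=d/|B|$, and since $K$ is a subfield of the finitely generated extension $\CC(X)/\CC$ it is itself finitely generated, hence the function field of an irreducible variety $Y$; the induced dominant rational maps give the decomposition in the sense of Definition~\ref{def:decomposable-branched-cover}. For this reason your closing appeal to Riemann existence or GAGA is unnecessary for route (a) — the only transcendental input is the monodromy-equals-Galois identification itself — though route (b), building $Y$ as a quotient of the cover by the block relation and then algebraizing, would indeed require it.
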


\begin{example}\label{ex:decomposable}
For the branched cover $f$ from~\Cref{ex:palindromic-sextic}, the Galois/monodromy group acts transitively on the set of roots, which we replace with a set of labels $B = \{1 , \ldots , 6 \}$. 
Up to relabeling, there is a block decomposition for this action given by $ B = \{ 1, 2, 3 \} \sqcup \{ 4, 5, 6 \}.$
There are $48 = 2^3 \cdot 3! $ permutations in $S_6$ that preserve this block decomposition.
These permutations form a group called the \emph{wreath product} $S_2 \wr S_3.$
This group can be presented by three permutation generators, for instance
\begin{equation}\label{eq:wr-2-3-presentation}
\langle 
(1 2) (4 5), (1 2 3) (4 5 6), (1 4) (2 5) (3 6)
\rangle .
\end{equation}
Computing the Galois/group monodromy group numerically, we find that every element of $S_2 \wr S_3$ arises as $\sigma_\gamma $ for some loop $\gamma .$

Similarly, for the branched cover from~\Cref{ex:sparse-triangular-system}, we find by numerical computation that its Galois/monodromy group is the wreath product $S_4 \wr S_8$, a group of order $(4!)^8 \cdot 8!$
\end{example}

In general, a transitive, imprimitive permutation group has a block system $B_1, \ldots, B_k$ whose blocks all have the same size $l,$ and is thus permutation-isomorphic to a subgroup of the wreath product $S_l \wr S_k.$
Unlike the previous example, there are a number of surprising cases of decomposable branched covers where the Galois/monodromy group is a \emph{proper} subgroup of the associated wreath product: for instance, the five-point problem of~\Cref{subsec:five-point-problem}.

We point out that~\Cref{prop:decomposability} dates back, at least in some form, to work of Ritt on polynomial decompositions~\cite{MR1501205}.
This work is directly related to \emph{decomposition problems} for polynomials and rational functions studied in computer algebra (see eg.~\cite{DBLP:conf/issac/FaugereGP10, DBLP:conf/casc/Gathen0R99}).

However, the main focus in this paper is not decomposability \emph{per se}.
Rather, we are interested in a property that is usually stronger: the existence of symmetries.
A natural, and general, notion of symmetry can be obtained by studying the embedding of function fields $f^* :\CC (Z) \dashrightarrow \CC (X)$ induced by a branched cover.
The field extension $\CC (X) / \CC (Z)$, although not usually a Galois extension, may nevertheless a have a nontrivial group of automorphisms.
These automorphisms correspond to rational maps $\Psi : X \dashrightarrow X $ with $f \circ \Psi = f.$
Topologically, these comprise the group of \emph{deck transformations} of $f.$

\Cref{prop:centralizer} below explains the relationship between deck transformations and decomposability, and provides an analogue of~\Cref{prop:decomposability} for detecting the existence of deck transformations.
Proofs are in~\cite[\S 2.1]{GaloisComputerVision}.

\begin{proposition} \label{prop:centralizer}
Let $f :X \dashrightarrow Z$ be a branched cover of degree $d.$
\begin{enumerate}
\item If $f$ has a nontrivial deck transformation group, then its Galois/monodromy group is decomposable or cylic of order $d.$ 
(Both hold for composite $d$.)
\item Restricting the deck transformations to the fiber $f^{-1} (z)$ defines another permutation group which is the centralizer of the Galois/monodromy group in $S_d.$
In particular, there exists a nontrivial deck transformation if and only if this centralizer is nontrivial. 
\end{enumerate}
\end{proposition}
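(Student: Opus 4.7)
The plan is to prove part (2) first, since part (1) is a formal consequence of it together with standard permutation-group facts. For (2), the strategy is to establish a bijection between deck transformations of $f$ and elements of the centralizer $C_{S_d}(G)$, where $G$ denotes the Galois/monodromy group, using the path-lifting machinery already set up in~\eqref{eq:monodromy-rep}. Fix the regular locus $f^{-1}(U) \to U$, basepoint $z \in U$, and fiber $f^{-1}(z) = \{x_1, \ldots, x_d\}$. Any deck transformation $\Psi$ restricts to a permutation $\tau_\Psi$ of $f^{-1}(z)$ because $f \circ \Psi = f$, giving a homomorphism from the deck group to $S_d$. Injectivity of this restriction follows from uniqueness of path lifts: two deck transformations agreeing on one fiber point must agree on every lift of every path, hence on a Zariski-dense set, hence as rational maps.

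For the forward direction of (2), I would show that $\tau_\Psi$ commutes with every monodromy permutation $\sigma_\gamma$. Given any loop $\gamma$ based at $z$ and any lift $\tilde{\gamma}_i$ starting at $x_i$, the composition $\Psi \circ \tilde{\gamma}_i$ is another continuous lift of $\gamma$, starting at $\tau_\Psi(x_i)$. Comparing endpoints gives $\Psi(\sigma_\gamma(x_i)) = \sigma_\gamma(\Psi(x_i))$, i.e., $\tau_\Psi \circ \sigma_\gamma = \sigma_\gamma \circ \tau_\Psi$. Conversely, given $\tau \in C_{S_d}(G)$, I would build a deck transformation fiberwise: for a generic $w \in f^{-1}(U)$, choose a path in $U$ from $z$ to $f(w)$, lift it to a path from some $x_i$ ending at $w$, and set $\Psi(w)$ to be the endpoint of the lift starting at $\tau(x_i)$. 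The commutation $\tau \sigma_\gamma = \sigma_\gamma \tau$ ensures this is independent of the path chosen, and the resulting holomorphic map on $f^{-1}(U)$ extends to a rational self-map of $X$ since it is defined on a Zariski-dense open subset of an irreducible variety.

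For part (1), assume $f$ has a nontrivial deck transformation, so by (2) there is a nontrivial $\tau \in C_{S_d}(G)$. The key permutation-group fact is that for any transitive $G \leq S_d$ and any $\tau \in C_{S_d}(G)$, the orbits of $\langle \tau \rangle$ form a block system for $G$: if $y = \tau^k(x)$, then $g(y) = \tau^k(g(x))$, so $g$ maps $\tau$-orbits to $\tau$-orbits, and transitivity of $G$ forces all orbits to have the same size. If this block system is nontrivial, $G$ is imprimitive and hence decomposable by~\Cref{prop:decomposability}. Otherwise $\tau$ acts as a single $d$-cycle; the centralizer of a $d$-cycle in $S_d$ equals the cyclic group it generates, so $G \subseteq \langle \tau \rangle$, and transitivity forces $G = \langle \tau \rangle$, cyclic of order $d$. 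For composite $d = kl$ with $k,l > 1$, the subgroup $\langle \tau^k \rangle$ has orbits of size $l$, yielding a nontrivial block system; this justifies the parenthetical remark that both alternatives hold in the composite case.

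The main obstacle I anticipate is the extension step in the reverse direction of (2): verifying that the constructed holomorphic map on $f^{-1}(U)$ is genuinely a rational self-map of $X$. The map is algebraic on the Zariski-open $f^{-1}(U)$, so one must argue that its graph has Zariski closure in $X \times X$ defining a rational map, which follows from standard facts about extending morphisms defined on dense open subsets of irreducible varieties. A secondary delicate point is the well-definedness of $\Psi$ independent of path choice, which requires the full centralizer hypothesis (commutation with \emph{every} $\sigma_\gamma$, not just generators) together with path-connectedness of $U$ in the Euclidean topology.
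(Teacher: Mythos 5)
The paper does not prove \Cref{prop:centralizer} inline; it defers to \cite[\S 2.1]{GaloisComputerVision}, where the correspondence between deck transformations and the centralizer is obtained algebraically, via the identification of $\deck(X/Z)$ with $\Aut(\CC(X)/\CC(Z))$ and the Galois correspondence for the Galois closure ($\Aut(\CC(X)/\CC(Z))\cong N_G(H)/H$ acting on $G/H$, whose image is exactly the centralizer of $G$ acting by left translation). Your route is the purely topological one. Your part (1) is complete and correct: the $\langle\tau\rangle$-orbits form a block system for any transitive $G$ centralized by $\tau$, the single-block case forces $G=\langle\tau\rangle\cong\ZZ_d$ via the fact that the centralizer of a $d$-cycle is the cyclic group it generates, and the $\langle\tau^k\rangle$-orbit refinement handles the composite case. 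The injectivity of restriction-to-a-fiber and the forward inclusion (image of $\deck(f)$ lies in the centralizer) are also fine, modulo the routine remark that one should shrink $U$ (or homotope $\gamma$) so that the loops and their lifts avoid the indeterminacy loci of the finitely many deck transformations; this is harmless since those loci have positive codimension.

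The genuine gap is in the surjectivity direction of part (2). Your path-lifting construction produces, for each $\tau$ in the centralizer, a \emph{holomorphic} self-map of the analytic covering space $f^{-1}(U)$; your write-up then asserts ``the map is algebraic on the Zariski-open $f^{-1}(U)$,'' but that is precisely what must be proved, and it does not follow from ``standard facts about extending morphisms defined on dense open subsets'' --- those facts apply to maps already known to be algebraic, whereas a holomorphic map on a Zariski-open set need not be rational (the graph of $e^z$ on $\CC^*$ has Zariski closure all of $\CC^2$). The correct repair is to observe that the graph $\Gamma$ of your construction is a connected, hence irreducible, $m$-dimensional analytic subset of the \emph{fiber product} $X\times_Z X$, which is an algebraic variety of dimension $m$; therefore $\overline{\Gamma}$ is an irreducible component of $X\times_Z X$ of dimension $m$ projecting with degree one onto the first factor, and so defines a rational map $\Psi$ with $f\circ\Psi=f$. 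Taking the closure in $X\times X$ alone, as you propose, loses the dimension bound that makes this argument close. Alternatively one can bypass the issue entirely by running the argument on function fields, as the cited reference does. With that repair made, your proof is correct and is the natural topological counterpart of the paper's algebraic one; note also that your well-definedness argument for $\Psi$ only uses commutation with the image of $\rho_{u,Z}$, i.e.\ with the full monodromy group, exactly as you say.
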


\begin{example}\label{ex:compare-decomp-deck}
For the branched cover $f$ from~\Cref{ex:palindromic-sextic}, the centralizer in $S_6$ of the Galois/monodromy group presented as in~\eqref{eq:wr-2-3-presentation} is a cyclic group of order $2$, namely $\langle  (1 4) (2 5) (3 6) 
\rangle $. 
Correspondingly, there is a nontrivial deck transformation $\Psi : X \dashrightarrow X$ defined by $\Psi (a,b,c,d,x) = (a,b,c,d,1/x).$  

For the branched cover $f$ of~\Cref{ex:sparse-triangular-system}, the centralizer of its Galois/monodromy group $S_4 \wr S_8$ in $S_{32}$ is trivial.
Thus, this decomposable branched cover has no nontrivial deck transformations.
\end{example}

In the final results of this section, \Cref{prop:deck_via_paths} and \Cref{cor:deck_after_tracking}, we use the terminology \emph{generic path} for a given branched cover $f: X \dashrightarrow Z.$ 
This means a path $\alpha : [0,1] \to U$ where $U$ is some suitably small set, either a regular locus in $Z$ or its preimage in $X.$
In the former case, we write $\wt{\alpha}_x$ for the unique lift of a path $\alpha$ through $f$ starting at $x \in f^{-1} (\alpha (0))$.

\begin{figure}[h!]
\centering
%\resizebox{\textwidth}{!}{\includegraphics{journal_paper/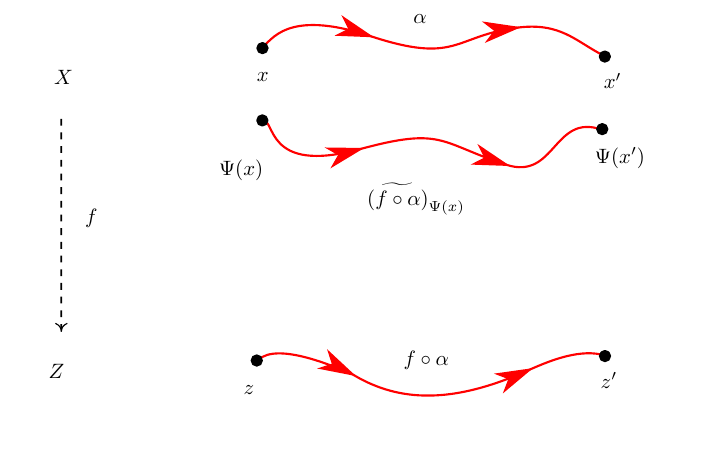}}
\includegraphics[scale=0.75]{figs/prop-3-10.pdf}
\caption{Illustration of~\Cref{prop:deck_via_paths}.}
\label{fig:deck_via_paths}
\end{figure}

\begin{proposition} \label{prop:deck_via_paths}
Let $\rat{f}{X}{Z}$ be a branched cover with a fixed generic point $x \in X$. Then the value of a deck transformation $\Psi \in \deck(X/Z)$ at a generic point $x' \in X$ is completely determined via path-lifting by where it sends $x$. 
Explicitly, 
\begin{equation}  \label{eq:deck_via_paths}
    \Psi(x') = \wt{\left(f \circ \alpha \right)}_{\Psi(x)}(1),
\end{equation}
where $\alpha$ is a generic path in $X$ from $x$ to $x'$ (see Figure \ref{fig:deck_via_paths}). 
\end{proposition}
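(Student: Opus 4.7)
The plan is to exploit two basic facts: a deck transformation $\Psi$ satisfies $f \circ \Psi = f$ on its domain of definition, so it preserves fibers; and continuous lifts of paths through a covering map are unique. Composed, these two observations force the claimed formula almost immediately.

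First, I would fix a dense Zariski-open set $U \subset Z$ over which $f \colon f^{-1}(U) \to U$ restricts to an honest $d$-sheeted topological covering map, and, possibly after shrinking $U$, such that $\Psi$ is a regular morphism on $f^{-1}(U)$ and sends $f^{-1}(U)$ into itself. This is possible because the indeterminacy and branching loci are proper closed subvarieties. The word ``generic'' in the statement is then interpreted to mean $\alpha([0,1]) \subset f^{-1}(U)$ and $x' = \alpha(1)$; generic continuous paths between two points in $f^{-1}(U)$ can be chosen to avoid the closed complement.

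Next, I would consider the composite path $\Psi \circ \alpha \colon [0,1] \to X$. Since $f \circ \Psi = f$ on $f^{-1}(U)$, we have
\[
    f \circ (\Psi \circ \alpha) \;=\; (f \circ \Psi) \circ \alpha \;=\; f \circ \alpha,
\]
so $\Psi \circ \alpha$ is a continuous lift of $f \circ \alpha$ through $f$, with initial value $(\Psi \circ \alpha)(0) = \Psi(x)$. Applying the unique lifting property (the same one cited from Hatcher just before~\eqref{eq:monodromy-rep}) to the covering map $f\colon f^{-1}(U) \to U$, there is exactly one continuous lift of $f \circ \alpha$ starting at $\Psi(x)$, and this lift is by definition $\wt{(f \circ \alpha)}_{\Psi(x)}$. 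Hence $\Psi \circ \alpha = \wt{(f \circ \alpha)}_{\Psi(x)}$, and evaluating at $t = 1$ yields $\Psi(x') = \wt{(f \circ \alpha)}_{\Psi(x)}(1)$, as claimed.

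The only real obstacle is the genericity hypothesis: because $\Psi$ is merely a rational map and $f$ is branched, one must verify that the path $\alpha$ stays in a single open set on which $\Psi$, $f$, and the covering structure are all simultaneously well-defined. Once this is arranged by the shrinkage of $U$ above, the remainder of the argument is a short formal manipulation, and the proposition follows.
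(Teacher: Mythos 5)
Your proof is correct and rests on the same mechanism as the paper's, namely the unique path-lifting property for the covering $f^{-1}(U)\to U$; the paper merely packages this by invoking the construction in Hatcher's Proposition 1.33 to build an auxiliary lift $\Psi'$ agreeing with $\Psi$ at $x$, whereas you apply uniqueness of lifts directly to the path $\Psi\circ\alpha$. Your explicit shrinking of $U$ to make $\Psi$ regular and fiber-preserving along $\alpha$ is a welcome clarification of what ``generic path'' must mean here.
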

\begin{proof}
We refer to the proof of~\cite[Prop.~1.33]{hatcher} and the general definition of a \emph{lift} given on~\cite[p.~60]{hatcher}.
The deck transformation $\Psi$ is a lift of $f$ to $X$ in the sense of this definition.
This means the proof of Proposition 1.33 can be applied to construct a deck transformation $\Psi '$ with $\Psi ' (x) = \Psi (x).$
This construction uses lifts of a generic path $\alpha $ to construct $\Psi '$, with the additional property that $\Psi ' (x') = \wt{\left(f \circ \alpha \right)}_{\Psi(x)}(1).$
The unique path-lifting property then implies that $\Psi (x') = \Psi' (x')$.
\end{proof}

A consequence of~\Cref{prop:deck_via_paths} is that the correspondence between solutions for a fixed set of parameters under a fixed deck transformation $\Psi $ is preserved under path-lifting.

\begin{figure}[h!]
\centering
%\resizebox{\textwidth}{!}{\includegraphics{journal_paper/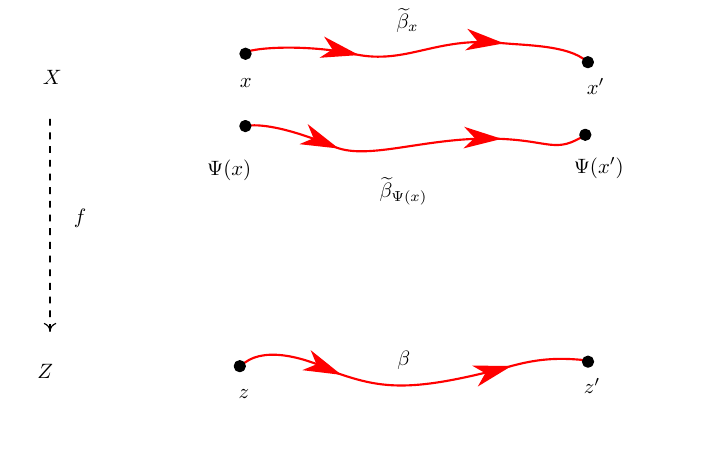}}
\includegraphics[scale=0.75]{figs/cor-3-11.pdf}
\caption{Illustration of~\Cref{cor:deck_after_tracking}.}
\label{fig:deck_after_tracking}
\end{figure}

\begin{cor} \label{cor:deck_after_tracking}
Let $\rat{f}{X}{Z}$ be a branched cover and $\Psi \in \deck(X/Z)$. Let $z \in Z$ be a generic point and $\beta$ be a generic path in $Z$ starting at $z$ (see Figure \ref{fig:deck_after_tracking}). Then for $x \in X_{z}$ we have
\[ \Psi(\wt{\beta}_x(1)) = \wt{\beta}_{\Psi(x)}(1) \]
In other words, the points in two lifts of $\beta$---one starting at $x$, the other starting 
at $\Psi(x)$---are conjugate under $\Psi$ (see~\Cref{fig:deck_after_tracking}).
\end{cor}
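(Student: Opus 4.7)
The plan is to obtain this as an essentially immediate consequence of Proposition \ref{prop:deck_via_paths} by making a judicious choice of the generic path $\alpha$ in $X$. Specifically, I would set $x' := \wt{\beta}_x(1)$, so that we wish to compute $\Psi(x')$, and take $\alpha := \wt{\beta}_x$, the unique lift of $\beta$ through $f$ starting at $x$. Then $\alpha$ is a path in $X$ from $x$ to $x'$, and it is a generic path in $X$ because $\beta$ is a generic path in $Z$ (so it stays in a regular locus $U \subset Z$ over which $f$ is a covering map) and unique lifting keeps $\alpha$ inside the regular preimage $f^{-1}(U) \subset X$.

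Next, I would apply Proposition \ref{prop:deck_via_paths} with this choice of $\alpha$. This yields
\[
  \Psi(x') \;=\; \wt{(f \circ \alpha)}_{\Psi(x)}(1).
\]
The key identification is that $f \circ \alpha = f \circ \wt{\beta}_x = \beta$ by the very definition of a lift. Substituting $x' = \wt{\beta}_x(1)$ on the left and $f \circ \alpha = \beta$ on the right produces
\[
  \Psi\bigl(\wt{\beta}_x(1)\bigr) \;=\; \wt{\beta}_{\Psi(x)}(1),
\]
which is exactly the claim.

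The only point that requires mild care, and which I view as the only potential obstacle, is checking that the genericity hypothesis of Proposition \ref{prop:deck_via_paths} applies to both endpoints simultaneously: not only must $\alpha$ avoid the branch locus of $f$, but $\Psi(x)$ must also be a generic point so that the lift $\wt{\beta}_{\Psi(x)}$ is well-defined along the entire interval $[0,1]$. This is handled by noting that $\Psi$ is a birational self-map of $X$ commuting with $f$, hence restricts to a bijection between generic fibers; in particular, $\Psi$ sends the regular locus above $U$ to itself after shrinking $U$ if necessary. Once this is in place, the corollary really is just a repackaging of Proposition \ref{prop:deck_via_paths}, so no further work is needed.
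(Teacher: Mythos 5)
Your proof is correct and takes exactly the same route as the paper: apply Proposition~\ref{prop:deck_via_paths} with $\alpha := \wt{\beta}_x$, then use $f \circ \wt{\beta}_x = \beta$ to simplify. The closing remark about $\Psi(x)$ being generic is a reasonable added precaution, but the paper treats it as implicit in the word ``generic.''
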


\begin{proof}
By \Cref{prop:deck_via_paths}, $\Psi(\wt{\beta}_x(1)) = \wt{\left(f \circ \wt{\beta}_x\right)}_{\Psi(x)}(1)$, which, in turn, is equal to $\wt{\beta}_{\Psi(x)}(1)$, since $f \circ \wt{\beta}_x = \beta$.
\end{proof}

\section{Basic method -- dense interpolation}\label{sec:basic-method}

Consider a branched cover encoding problem-solution pairs $(\xx , \pp )$, 
\begin{align}
f : X &\to \CC^m \label{eq:bc-affine} \\
(\xx , \pp ) &\mapsto \pp \nonumber 
\end{align}
with $\deg (f) = d$, which has a nontrivial deck transformation
\begin{equation}\label{eq:deck}
\Psi(\xx,\pp) = \matrix{\psi_1(\xx,\pp) & \dots & \psi_n(\xx,\pp) & \pp^\top}^\top.
\end{equation}

As mentioned in the introduction, we may compute the Galois/monodromy group of $f$ using numerical homotopy continuation.
This is possible provided that we make the following assumptions about how our branched cover is given as input.

\begin{assumption}\label{assumption:sampling-and-equations}
For the branched cover defined in~\eqref{eq:bc-affine}, assume that $n$ rational functions $f_1, \ldots , f_n$ vanishing on $X$ are known, and that we have access to a sampling oracle that produces generic $(\xx^* , \pp^*) \in X$ such that the $n\times n$ Jacobian $\frac{\partial \mathbf{f}}{\partial \xx} (\xx^* , \pp^*) $ has rank $n.$
\end{assumption}

\Cref{assumption:sampling-and-equations} is often satisfied in practice, including cases where even a set-theoretic description of $X$ is not known.
Additionally, we assume that homotopy continuation---specifically, coefficient parameter homotopy---can be used to track $d$ known solutions for fixed, generic parameter values $\pp^*$ (corresponding to $f^{-1} (\pp^*)$) to $d$ solutions for some other parameter values $\pp\in \CC^m$ (corresponding to $f^{-1} (\pp)$).
These parameter homotopies are the basis of the unspecified subroutines in lines 1 and 9 of~\Cref{alg:deck-interpolation}.

An important observation is that we can interpolate each of the coordinate functions $\psi_j(\xx,\pp)$ in~\eqref{eq:deck} independently. We assume that the rational function $\psi_j$ contains only monomials up to total degree $D$. 
Since these monomials may or may not involve the parameters $\pp$, we distinguish the \emph{parameter-dependent} and \emph{parameter-independent} settings, in which we take the number of monomials $t$ to be either
\begin{align}
t &= {n+m+D \choose D}, \text{ or}   \tag{for parameter-dependent $\psi_j (\xx, \pp)$}\\
t &= {n+D \choose D}    \tag{for parameter-independent $\psi_j (\xx )$} \label{eq:t-param-indep}.
\end{align}
Our task is then to recover two vectors of unknown coefficients 
\[
\aa = \matrix{a_1 & \dots & a_t}^\top , \, \, \bb = \matrix{b_1 & \dots & b_t}^\top \in \CC^t,\]
such that $\psi_j $ can be represented on $X$ as
\begin{equation}\label{eq:psi-ab} 
\psi_{\aa , \bb} (\xx,\pp) = \frac{\sum_{k=1}^{t} a_k\cdot(\xx,\pp)^{\al_k}}{\sum_{k=1}^{t} b_k\cdot(\xx,\pp)^{\be_k}} .\end{equation}
In~\Cref{eq:psi-ab}, the vectors $\al_k, \be_k \in \ZZ_{\ge 0}^{n+m}$ range over a suitable set of multidegrees, depending on whether we are in the parameter-dependent or parameter-independent setting. 
If we know that $(\xx_i', \pp_i) = \Psi(\xx_i, \pp_i)$ for points $(\xx_i , \pp_i), (\xx_i ' , \pp_i ) \in X$, this gives a homogeneous linear constraint on $\aa$ and $\bb$,
\begin{equation} \label{eq:constraint}
    \sum_{k=1}^t a_k\cdot(\xx_i,\pp_i)^{\al_k} - x'_{i j}\cdot\left(\sum_{k=1}^t b_k\cdot(\xx_i,\pp_i)^{\be_k}\right) = 0.
\end{equation}
Suppose we have already computed permutations generating the monodromy group based at parameter values $\pp_1 \in \CC^m$, and let $\xx_1, \xx_1'$ be two solutions with $\Psi (\xx_1 , \pp_1) = (\xx_1 ' , \pp_1)$.
\Cref{prop:centralizer} implies that $\sigma \cdot (\xx_1 , \pp_1) = (\xx_1 ', \pp_1 ')$ for some element of the centralizer $\sigma \in \mathrm{Cent}_{S_d}(\mathrm{Mon}(f, \pp_1))$ corresponding to $\Psi .$
Now,~\Cref{cor:deck_after_tracking} implies that we may obtain additional sample points satisfying~\eqref{eq:constraint} by tracking parameter homotopies using the system $f_1, \ldots , f_n.$
Specifically, we may track the solution curves with initial values $\xx_1, \xx_1'$ from $\pp_1$ to generic $\pp_i \in \CC^m$ for $i=1,\ldots , 2t$, which then allows us recover the coordinate functions of $\Psi .$

\begin{proposition}[Correctness of~\Cref{alg:deck-interpolation}]\label{prop:correctness}
Suppose that $\psi_j$ in~\eqref{eq:deck} can be represented as the quotient of polynomials with degree $\le D$ and $t$ monomials each.
For a sufficiently generic sample 
\[
(\xx_1, \pp_1), \ldots , (\xx_{2t}, \pp_{2t}) , (\xx_1', \pp_1), \ldots , (\xx_{2t}', \pp_{2t})\in X,
\]
with $(\xx_i', \pp_i)= \Psi (\xx_i, \pp_i) $ for all $i$,
suppose $\matrix{\aa^\top & \bb^\top}$ is a solution to the $2t$ linear equations given by~\eqref{eq:constraint} for $i=1,\ldots , 2t$, 
which lies outside the span of all solutions with $\aa = \mathbf{0}$ or $\bb = \mathbf{0}.$ 
Then the rational function obtained by restricting $\psi_{\aa,\bb}(\xx,\pp)$ to $X$ equals $\psi_j$. 
\end{proposition}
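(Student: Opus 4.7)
The plan is to reformulate the $2t$ constraints~\eqref{eq:constraint} as a single linear map on the polynomial coefficient space and then identify its kernel intrinsically. Let $V$ denote the $t$-dimensional space of polynomials spanned by the prescribed monomials, and write $N_\aa, M_\bb \in V$ for the polynomials with coefficient vectors $\aa$ and $\bb$. Since $x'_{ij} = \psi_j(\xx_i, \pp_i)$, the constraints~\eqref{eq:constraint} are equivalent to $\phi(N_\aa, M_\bb) = 0$, where the linear map $\phi \colon V \oplus V \to \CC^{2t}$ is defined by $\phi(N, M)_i = (N - \psi_j M)(\xx_i, \pp_i)$. I would then introduce the subspace $W = \{(N, M) \in V \oplus V : (N - \psi_j M)|_X \equiv 0 \text{ in } \CC(X)\}$; by construction $W \subseteq \ker \phi$, and $W$ contains both a ``true'' representation $(N_j, M_j)$ of $\psi_j$ and every degenerate pair $(N, 0)$ or $(0, M)$ in which one component restricts to zero on $X$.

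The main step will be to establish that $\ker \phi = W$ for sufficiently generic samples. By construction, the quotient $(V \oplus V)/W$ embeds via $(N, M) \mapsto (N - \psi_j M)|_X$ as a finite-dimensional subspace $U \subseteq \CC(X)$ of dimension at most $2t$, and the induced map $\bar\phi \colon (V \oplus V)/W \to \CC^{2t}$ is precisely the composition of this embedding with evaluation at the sample points $(\xx_i, \pp_i)$. Since $X$ is irreducible, evaluation at $2t$ generic points of $X$---avoiding the pole locus and common zero locus of a chosen basis of $U$---is injective on $U$, so $\bar\phi$ is injective and $\ker \phi = W$ as desired.

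To finish, I would unpack the degenerate-span hypothesis. Kernel elements with $\aa = \mathbf{0}$ (respectively $\bb = \mathbf{0}$) correspond exactly to pairs $(0, M) \in W$ (respectively $(N, 0) \in W$), i.e.\ to polynomials in $V$ that vanish on $X$. Any $(\aa, \bb) \in \ker \phi$ outside the span of such pairs therefore satisfies $N_\aa|_X = \psi_j M_\bb|_X$ with $M_\bb|_X \not\equiv 0$: otherwise $N_\aa|_X$ would also vanish on $X$, and $(\aa, \bb)$ would decompose as a combination of degenerate solutions, contradicting the hypothesis. Dividing then yields $\psi_{\aa, \bb} = N_\aa / M_\bb = \psi_j$ as rational functions on $X$, which is the required conclusion.

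The main obstacle is the genericity claim $\ker \phi = W$: one must verify that the samples actually produced by numerical monodromy and subsequent parameter-homotopy tracking---which are generated algorithmically rather than chosen freely---lie outside the proper Zariski-closed locus in $X^{2t}$ on which $\bar\phi$ fails to be injective. In other words, the content of ``sufficiently generic'' is that the sampling procedure lands in the complement of a measure-zero bad set depending on $\psi_j$, on $X$, and on the chosen monomial supports; once this is granted, the rest of the argument is linear algebra.
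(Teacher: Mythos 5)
Your proposal is correct and follows essentially the same route as the paper's proof: the paper's key (unproved) assertion that a rational function of the form~\eqref{eq:psi-ab} is ``determined by its values on $2t$ generic points of $X$'' is exactly your claim that $\ker\phi = W$, i.e.\ that evaluation at $2t$ generic points is injective on the at-most-$2t$-dimensional space $U\subseteq\CC(X)$, and your unpacking of the degenerate-span hypothesis matches the paper's observation that it forces $\psi_{\aa,\bb}$ to be a well-defined nonzero function on $X$. You simply supply the linear-algebra formalization that the paper leaves implicit.
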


\begin{proof}
The assumption that $\matrix{\aa^\top & \bb^\top}$ is a nontrivial linear combination of solutions with $\aa , \bb \ne \mathbf{0}$ ensures that $\psi_{\aa , \bb}$ is a well-defined, nonzero rational function on $X.$
Such a function of the form~\eqref{eq:psi-ab} is determined by its values on $2t$ generic points of $X.$
Since $\psi_j$, by assumption, is also such a function, the $2t$ linear constraints~\eqref{eq:constraint} force $\psi_j$ and $\psi_{\aa , \bb}$ to agree on $X.$
\end{proof}
Thus, to interpolate $\psi_j$, we may  
determine from the linear equations~\eqref{eq:constraint} a $2t\times2t$ Vandermonde-type coefficient matrix $\mathbf{A}$.
We represent the nullspace of $\mathbf{A}$ by the column-span of a matrix $\mathbf{N}$ with $2t$ rows. 
Although~\Cref{prop:correctness} can be viewed as a uniqueness statement, the matrix $\mathbf{N}$ will generally have more than one column, even for generic samples $(\xx_1, \pp_1 ), \ldots , (\xx_{2t} , \pp_{2t}) \in X.$
The ``extra" columns of $\mathbf{N}$ appear for two reasons:
\begin{enumerate}
    \item There may exist different representatives of $\psi_j$ on $X$ of the form~\eqref{eq:psi-ab}, whose coefficient vectors are linearly independent.
    \item The nullspace of $\mathbf{A}$ may contain \emph{spurious solutions} not satisfying the hypothesis $\aa = \mathbf{0}$ or $\bb = \mathbf{0}$ in~\Cref{prop:correctness}. For instance, fixing $\bb = \mathbf{0}$ we may interpolate polynomial functions of the form $\sum_{k=1}^t a_k\cdot(\xx,\pp)^{\al_k}$ vanishing on $X$. In the same way, fixing $\aa = \mathbf{0}$ we interpolate polynomial functions of the form $\sum_{k=1}^t b_k\cdot(\xx,\pp)^{\be_k}$ vanishing on $X$. 
\end{enumerate}
For some applications it may be necessary to pick a sparse representative from the nullspace of $\mathbf{A}$.
In general, finding the sparsest vector in the nullspace of a matrix is NP-hard~\cite{nullspace-hard}.
Nevertheless, in many cases we may find a relatively good sparse representative by looking at the reduced row echelon form of $\mathbf{N}^\top$ for some particular ordering of its columns and picking one with the fewest zeros subject to the additional constraints $\aa, \bb \neq \mathbf{0}$.
We illustrate some of the choices involved on two simple examples.

\begin{example} \label{ex:1}
Let $X = \VV (x^2 + px + 1 )$, $f (x,p) = p.$ The Galois/monodromy group and deck transformation group are both $S_2.$
When interpolating a nontrivial deck transformations of degree $D = 1$, we obtain the reduced row echelon form for 
$\mathbf{N}^\top$ below.
\[
\wt{\mathbf{N}} =  
\begin{blockarray}{cccrccc}
1 & x & p & 1 & x & p \\
\begin{block}{[cccrcc]l}
  \bigstrut[t] 1 & 0 & 0 & 0 & 1 & 0 & \frac{1}{x} \\
  0 & 1 & 1 & -1 & 0 & 0 & -x-p \bigstrut[b] \\
\end{block}
\end{blockarray}
 \]
We see that $\Psi(x,p)$ has 2 different representatives $\frac{1}{x}$ and $-x-p$, which both agree on $X.$ 
There is no clear choice of ``best representative''.
In terms of sparsity, the representative $\frac{1}{x}$ is superior. However, one might instead prefer $-x-p$ since it is a polynomial.
\end{example}

\begin{example} \label{ex:2}
Consider the branched cover 
\function{f}{\VV (x^2 + x + p, x + y + p)}{\CC}{ (x,y,p)}{p,}
which has a unique non-identity deck transformation $\Psi = (\psi_1, \psi_2).$
If we interpolate parameter-dependent deck transformations, we may find matrices $\mathbf{A}_1$ and $\mathbf{A}_2$ representing $\Psi$ which are $8\times 8$.
The reduced row echelon forms of the transposed nullspaces are
\[
\wt{\mathbf{N}_1} = 
\begin{blockarray}{rrrrrrrrr}
1 & x & y & p & 1 & x & y & p \\
\begin{block}{[rrrrrrrr]l}
  \bigstrut[t] 1 & 0 & -1 & 0 & -1 & 0 & -1 & -1 & \frac{1-y}{-1-y-p} \\ [5pt]
  0 & 1 & 1 & 0 & 0 & 0 & 1 & 1 & \frac{x+y}{y+p} \\ [5pt]
  0 & 0 & 0 & 1 & 0 & 0 & -1 & -1 & \frac{p}{-y-p} \\ [5pt]
  0 & 0 & 0 & 0 & 0 & 1 & 1 & 1 & \text{spurious,}
 \bigstrut[b] \\
\end{block}
\end{blockarray}
\]
and for $\psi_2(x,y,p)$ we have
\[
\wt{\mathbf{N}_2} = 
\begin{blockarray}{rrrrrrrrr}
1 & x & y & p & 1 & x & y & p \\
\begin{block}{[rrrrrrrr]l}
  \bigstrut[t] 1 & 0 & -1 & -2 & 1 & 0 & 0 & 0 & 1-y-2p \\ [5pt]
  0 & 1 & 1 & 1 & 0 & 0 & 0 & 0 & \text{spurious} \\ [5pt]
  0 & 0 & 0 & 0 & 0 & 1 & 1 & 1 & \text{spurious.}
 \bigstrut[b] \\
\end{block}
\end{blockarray}
\]

If we are not interested in the sparsest representative, then we may take $\psi_1 = \frac{p}{-y-p}$ and $\psi_2 = 1-y-2p$.

In this example, it is possible to find the sparsest polynomial representative for $\psi_1$ by solving an auxiliary linear system. 
In other words, we compute a linear combination of rows $\matrix{\aa^\top & \bb^\top} = \rr^\top \wt{\mathbf{N}_1}$ such that $\bb^\top = \matrix{1 & \mathbf{0}^\top}$ and $\aa^\top $ contains the minimum number of zeros.
First, to obtain $\bb^\top = \matrix{1 & \mathbf{0}^\top}$, we solve a linear system obtained from the right $4\times 4$ block of $\wt{\mathbf{N}_1}$,
\[ \left(\rr^\top \wt{\mathbf{N}_1} \right)_{:,5:8} = \matrix{1 & \mathbf{0}^\top}. \]
The general solution of this system is given by
\[ \rr^\top = \matrix{-1 & r & r+1 & 0}, \quad r \in \CC. \]
Using $\rr$ to form a linear combination of rows now from the \emph{left} $4\times4$ block of $\wt{\mathbf{N}_1}$, we obtain
\[ \aa^\top = \matrix{-1 & r & r+1 & r+1}. \]
To maximize the sparsity, we may set $r = -1$ to obtain
\[ \matrix{\aa^\top & \bb^\top} = \matrix{-1 & -1 & 0 & 0 & 1 & 0 & 0 & 0} \]
which encodes the function
\[ \psi_1(x,y,p) = -x-1. \]
\end{example}

\begin{algorithm}
\DontPrintSemicolon
  
\KwInput{
$F = (f_1, \ldots , f_n)$ and $(\xx^*, \pp^*)$ as in~\Cref{assumption:sampling-and-equations}, representing $f$ as in~\eqref{eq:bc-affine}; an upper bound for the total degree $D^*$ of monomials in each interpolant
}
  \KwOutput{Partially-specified rational maps representing the group of deck transformations, $\{\Psi_1,\dots,\Psi_q\} = \mathrm{Deck}(f)$, with all coordinate functions representable in degree $\le D^*$ specified}
  $(x^{(1)}, \dots, x^{(d)}), \mathrm{Mon}(f,\pp^*) \gets$ \verb|run_monodromy|($F, \xx^*, \pp^*$)\\
  $\{\sigma_1,\dots,\sigma_q\} \gets \mathrm{Cent}_{S_d}(\mathrm{Mon}(f, \pp^*))$\\
  $\Psi_1 \gets \xx$\\
  \For{$i \gets 2;\ i \leq q;\ i \gets i + 1$}{
    $\Psi_i \gets \matrix{\textbf{missing} & \dots & \textbf{missing}}^\top$
  }
  \For{$D \gets 1;\ D \leq D^*;\ D \gets D + 1$}{
    $t \gets {n+m+D \choose D}$, or ${n+D \choose D}$ if parameter-independent\\
    Track the orbit $\deck(f) \cdot x^{(1)}$ to $2t$ random instances of $F$\\
    \For{$i \gets 2;\ i \leq q;\ i \gets i + 1$} {
      
      \For{$j \gets 1;\ j \leq n;\ j \gets j + 1$} {
      \If{$\Psi_{i_j} \mathrm{\;is \; \mathbf{missing}}$}{
        $\mathbf{A}_j \gets$ $2t \times 2t$ Vandermonde matrix from~\eqref{eq:constraint},\\
        \indent $\quad \xx_k ' = \sigma_i \cdot \xx_k$ for $k=1,\ldots , 2t$ \\
        $\mathbf{N}_j \gets$ nullspace($\mathbf{A}_j$)\\
        $\wt{\mathbf{N}_j} \gets$ rref($\mathbf{N}_j^\top$)\\
        $\matrix{\aa^\top & \bb^\top}^\top \gets$ \texttt{get\char`_representative} ($\wt{\mathbf{N}_j}$)\\
        \If{$\matrix{\aa^\top & \bb^\top}^\top \mathrm{is\; not \; \mathbf{nothing}}$}{
          $\Psi_{i_j} \gets \frac{\sum_{k=1}^{t} a_k\cdot(\xx,\pp)^{\al_k}}{\sum_{k=1}^{t} b_k\cdot(\xx,\pp)^{\be_k}}$
        }
      }
      }
    }
    \If{\text{all} $\Psi_i$ \text{are interpolated}}{
        \Return $\{\Psi_1,\dots,\Psi_q\}$
    }
  }
  \Return $\{\Psi_1,\dots,\Psi_q\}$

\caption{Inhomogeneous interpolation of $\deck (f)$}\label{alg:deck-interpolation}
\end{algorithm}

Our pseudocode in~\Cref{alg:deck-interpolation} outlines a degree-by-degree procedure for interpolating the full set of deck transformations up to a given degree $D^*.$
To implement such a procedure, there are many design choices that could improve performance or meet the needs of a particular task.
Among the design choices, we note that the monodromy, parameter homotopy, and \verb|get_representative| subroutines on respective lines 1, 9, and 17 are left unspecified.
Our implementation relies on \texttt{HomotopyContinuation.jl} for the first two of these subroutines.
For \verb|get_representative|, our implementation chooses the sparsest row in the rref matrix $\wt{\mathbf{N}_j}.$
For the final output of line 19, we heuristically truncate ``small'' entries of $\wt{\mathbf{N}_j}$ of size $<10^{-5}$.

Finally, we note the following improvements to the pseudocode in~\Cref{alg:deck-interpolation}, which we have used in our implementation.
\begin{enumerate}
\item Computing the monodromy group and centralizer in lines 1--2 is an offline task which only needs to be performed once for a given family of systems.
\item In practice, we might only need to recover generators of the deck transformation group.
The needed modifications are trivial, since deck transformations are interpolated independently.
\item To restart the computation at a higher degree limit $D^*,$ one can use previously-computed samples from $X.$
In principle, one can also draw $>2t$ samples and compute the nullspace of the resulting rectangular matrices $\mathbf{A}_j.$
\item To minimize the number of calls to the parameter homotopy subroutine, one can attempt to track samples in ``batches": since every fiber consists of $d$ points and each point gives $1$ constraint on $\psi_j$, then we need to obtain a complete set of $d$ solutions for $r$ different sets of parameters (including $\pp$) such that
\begin{equation}\label{eq:sample-bound}
rd \geq 2t \Rightarrow r \geq \ceil{\frac{2t}{d}}.
\end{equation}
In our experience, this strategy can work well, but comes with the additional caveat that the samples need not satisfy the genericity conditions of~\Cref{prop:correctness}, since multiple parameter values are duplicated.
We encoutered one (ultimately benign) instance of this phenomenon in our study of Alt's problem~\Cref{subsec:alts-problem}. 
In this example, we had $d = 8652 > 2t = 650$, and this strategy resulted in many more spurious rows in $\wt{\mathbf{N}}$ due to all samples using the same parameter values.
\end{enumerate}

\section{Continuous symmetries and multigraded interpolation}\label{sec:symmetries}

As we have already seen, the deck transformations of a branched cover provide us with one useful way to formalize the study of symmetries of a parametric family of polynomial systems.
However, this is by no means the only useful notion of symmetry---deck transformations, although they may depend on both parameters and unknowns, only act nontrivially on the unknowns.
In this section, we consider symmetries that act nontrivially on both parameters and unknowns. 
We assume some basic notions about algebraic groups acting rationally on algebraic varieties, eg. as treated in~\cite[\S 1]{vinberg-popov}.
After a general discussion of branched covers which are equivariant with respect to a general rational action, we specialize to the case of \emph{scaling symmetries}, also known as quasi-torus actions.
The deck transformations of a branched cover equipped with these symmetries are quasi-homogeneous with respect to an associated multigrading.
This leads to~\Cref{alg:graded-deck-interpolation}, a multigraded refinement of~\Cref{alg:deck-interpolation}, which can produce Vandermonde matrices of considerably smaller size.

\subsection{Equivariant Branched Covers}\label{subsec:equivariant}

\begin{defn}\label{def:equivariant}
Let $\rat{f}{X}{Z}$ be a branched cover and $G$ be an algebraic group acting rationally on both $X$ and $Z$. We say $f$ is \emph{$G$-equivariant} if
\begin{equation}\label{eq:equivariant} f(g \cdot x) = g \cdot f(x) \end{equation}
for all $(x,g)$ in some Zariski-open subset of $X \times G$ where both sides of~\eqref{eq:equivariant} are defined. 
If $G$ is irreducible and $\dim G >0,$ we say that the elements of $G$ are \emph{continuous fiber-respecting symmetries of $f$}.
\end{defn}

Since $G$ is a smooth variety, we note that its irreducibility is equivalent to its connectedness in the analytic topology.

The branched covers associated to systems of algebraic equations occurring in applications are typically equivariant with respect to some underlying symmetries of the problem.
For example, the pose estimation problems considered in~\Cref{subsec:five-point-problem,subsec:p3p,subsec:radial} are naturally associated to branched covers which are invariant under certain actions of the \emph{special Euclidean group} $\SECC (3)$, the complexified group of rotations and translations in $3$-space, as well as the scaling symmetries that are the primary focus of this section.

The next result is likely known.
We include a proof for completeness.

\begin{prop}\label{prop:deck-commute}
Let $\rat{f}{X}{Z}$ be a branched cover with a group of continuous fiber-respecting symmetries $G$, and let $\Psi \in \deck (f)$ be any deck transformation.
Then $\Psi$ is $G$-equivariant---that is,
\begin{equation}\label{eq:equivariant-deck}
\Psi (g\cdot x) = g \cdot \Psi (x)
\end{equation}
for all $(g,x)$ in some dense Zariski-open subset of $X \times G$. 
\end{prop}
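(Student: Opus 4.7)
The plan is to prove \eqref{eq:equivariant-deck} by showing that for generic $g \in G$, the conjugate $\Psi_g := g^{-1} \circ \Psi \circ g$ coincides with $\Psi$ as a rational self-map of $X$.

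First I would check that $\Psi_g \in \deck(f)$ for generic $g$. Using the $G$-equivariance \eqref{eq:equivariant} of $f$, one has $f \circ g^{-1} = g^{-1} \circ f$ on a dense open set, and therefore
\[
f \circ \Psi_g \;=\; f \circ g^{-1} \circ \Psi \circ g \;=\; g^{-1} \circ f \circ \Psi \circ g \;=\; g^{-1} \circ f \circ g \;=\; f,
\]
so $\Psi_g$ is indeed a deck transformation.

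Next, I would exploit the dichotomy between the irreducibility of $G$ and the finiteness of $\deck(f)$. Fix a generic point $x_0 \in X$; by \Cref{prop:deck_via_paths}, any element of $\deck(f)$ is uniquely determined by its value at $x_0$, so it is enough to prove $\Psi_g(x_0) = \Psi(x_0)$ for generic $g$. The evaluation map $\varphi \colon g \mapsto g^{-1} \cdot \Psi(g \cdot x_0)$ is a rational map $G \dashrightarrow X$ whose image lies inside the finite fiber $f^{-1}(f(x_0))$, and which is regular on some dense Zariski-open subset $U \subset G$ containing the identity $e$. Since $G$ is irreducible, $U$ is connected in the analytic topology, and since $f^{-1}(f(x_0))$ is discrete in this topology, the continuous map $\varphi|_U$ must be constant. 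Evaluating at $g = e$ gives $\varphi(e) = \Psi(x_0)$, so $\Psi_g(x_0) = \Psi(x_0)$ for every $g \in U$.

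To finish, the subset of $G \times X$ on which $\Psi(g \cdot x) = g \cdot \Psi(x)$ holds is Zariski-closed inside the dense open set $D \subset G \times X$ where all relevant compositions are defined; the previous paragraph shows that this subset contains the slice $U \times \{x_0\}$ for each generic $x_0$, so by a routine fiberwise argument it contains a dense open subset of $D$, which is precisely \eqref{eq:equivariant-deck}. The main obstacle is the bookkeeping of the dense Zariski-open loci on which the rational action of $G$, the map $f$, and the deck transformation $\Psi$ are simultaneously defined so that the compositions above make sense as rational maps; once these technicalities are handled, the argument reduces to the clean connectedness-versus-discreteness dichotomy above.
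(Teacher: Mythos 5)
Your proof is correct, but it takes a genuinely different route from the paper's. The paper argues by path-lifting: it joins $\mathrm{id}_G$ to $g$ by a path $\theta_g$ in $G$, observes that $t\mapsto\theta_g(t)\cdot x$ and $t\mapsto\theta_g(t)\cdot\Psi(x)$ are two lifts of the same path in $Z$ (using $G$-equivariance of $f$ and $f\circ\Psi=f$), and then invokes \Cref{cor:deck_after_tracking} to conclude that the endpoint of the second lift is $\Psi$ applied to the endpoint of the first. You instead show that $\Psi_g=g^{-1}\circ\Psi\circ g$ is again a deck transformation, reduce to a single generic point via the rigidity statement of \Cref{prop:deck_via_paths}, and conclude by the constancy of the regular map $g\mapsto g^{-1}\cdot\Psi(g\cdot x_0)$ from a connected set into the discrete fiber $f^{-1}(f(x_0))$. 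Both arguments hinge on the connectedness of $G$, but yours bypasses \Cref{cor:deck_after_tracking} entirely and packages the statement as ``the conjugation action of the connected group $G$ on the finite group $\deck(f)$ is trivial,'' which is arguably cleaner and yields the slightly stronger conclusion that \emph{all} of $\deck(f)$ is centralized by $G$ at once; the paper's version has the virtue of directly mirroring the numerical path-tracking used in its algorithms. One technicality you should make explicit: your identification of the constant value as $\varphi(e)=\Psi(x_0)$ requires the rational action to be defined at $(e,x_0)$ with $e\cdot x_0=x_0$. This is part of the standard definition of a rational action (and can otherwise be recovered by letting $g\to e$ within $U$ and using continuity of the action near $(e,\Psi(x_0))$), so it is a bookkeeping point rather than a gap.
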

\begin{proof}
Let $U_G\subset G$, $U_X\subset X$ be nonempty Zariski-open subsets such that both the group action map and $\Psi$ are defined for all pairs in $U_X \times U_G.$
Let $\func{\theta_g}{[0,1]}{U_G}$ be a smooth path in $G$ from $\mathrm{id}_G$ to $g$. 
For any $x \in U_X$, we define a path $U_X$ from $x$ to $g\cdot x$ by the rule
\[ \gamma_{x,g}(t) = \theta_g(t)\cdot x \]
To show the two paths $\gamma_{x,g}$ and $\gamma_{\Psi(x),g}$ have the same projection by $f$, consider the chain of equalities
\[ f(\gamma_{x,g}) = f(\theta_g(t)\cdot x) \overset{(1)}{=} \theta_g(t)\cdot f(x) \overset{(2)}{=} \theta_g(t)\cdot f(\Psi(x)) \overset{(3)}{=} f(\theta_g(t)\cdot \Psi(x)) = f(\gamma_{\Psi(x), g}), \]
where $(1)$ and $(3)$ follow from $G$-equivariance of $f$ and $(2)$ follows from the fact that $f\circ\Psi = f$. 
Now, using~\Cref{cor:deck_after_tracking}, simply note
\[ \Psi(g\cdot x) = \Psi(\gamma_{x,g}(1)) = \gamma_{\Psi(x),g}(1) = g\cdot \Psi(x). \]
\end{proof}

In what follows, we will restrict our attention to special cases where the group $G$ is abelian which leads directly to algorithmic improvements.
However, in view of the ubiquity of equivariant branched covers for more general groups, it would be interesting to study any further applications of this structure to the problem of deck transformation recovery.

\subsection{Scaling symmetries from Smith Normal Forms}\label{subsec:smith}

Consider again a branched cover of problem-solution pairs $f : X \to \CC^m$ as in~\Cref{assumption:sampling-and-equations}, where $X \subset \VV (F) \subset \CC^{n+m}$
is an irreducible affine variety locally defined by the square polynomial system
\begin{equation}
F = \begin{bmatrix}
f_1 (\xx, \pp) \\
\vdots \\
f_n (\xx , \pp)
\end{bmatrix}
=
\begin{bmatrix}
\sum_{i=1}^{k_1} a_{1i}(\xx, \pp)^{\al_{1i}}\\
\vdots \\
\sum_{i=1}^{k_n} a_{n i}(\xx, \pp)^{\al_{ni}} 
\end{bmatrix},\label{eq:F-support}
\end{equation}
with all $a_{i j}$ nonzero.
Each set of exponents $\{ \alpha_{i 1}, \ldots , \alpha_{i k_i} \}$ in~\eqref{eq:F-support} is called the monomial \emph{support} of $f_i,$ and denoted $\supp (f_i).$

For any $\lambda \in \CC^*$ and $\uu \in \ZZ^{n+m}$, and $(\xx , \pp) \in \CC^{n+m},$ we define
\begin{align}
\lambda^\uu = \begin{bmatrix}
\lambda^{u_1} & \cdots & \lambda^{u_{n+m}}
\end{bmatrix}^T &\in \CC^{n+m}, \nonumber \\
\lambda^\uu \str (\xx , \pp) = \begin{bmatrix}
\lambda^{u_1} x_1 & \cdots & \lambda^{u_{n+m}} p_{m}
\end{bmatrix}^T &\in \CC^{n+m}, \nonumber \\
\lambda^\uu \str \pp = \begin{bmatrix}
\lambda^{u_{n+1}} p_1 & \cdots & \lambda^{u_{n+m}} p_{m}
\end{bmatrix}^T &\in \CC^{m}. \label{eq:scale-actions}
\end{align}

We shall be interested in detecting certain scaling symmetries on $X$ of the form $\lambda \cdot (\xx , \pp ) = \lambda^{\uu} \str (\xx, \pp)$.
We consider both continuous symmetries $\CC^* \times X \dashrightarrow X,$ as well as discrete symmetries $\mathbb{Z}_s \times X \dashrightarrow X,$ where $\mathbb{Z}_s$ denotes the integers modulo $s.$ 
In the latter case, this means $\lambda^s=1.$
Such symmetries can be detected using well-known methods of integer linear algebra.
From the system $F$ in~\eqref{eq:F-support}, we form a matrix of shifted exponent vectors
\begin{equation}\label{eq:shifted-exponent-matrix}
\mathbf{A} = \begin{bmatrix}\al_{12}-\al_{11} & \dots & \al_{n k_n}-\al_{n 1} \end{bmatrix} \in \ZZ^{(n+m) \times \sum_{i=1}^n (k_i-1)},
\end{equation}
and compute its Smith Normal Form,
\begin{equation}\label{eq:A-SNF}
\begin{split}
\mathbf{A} = 
\left[ 
\begin{array}{c}
\mathbf{U}_3 \\
\hline 
\mathbf{U}_2 \\
\hline 
\mathbf{U}_1 
\end{array}
\right]^{-1}
\left[ \begin{array}{c|c|c}
\mathbf{I} & \mathbf{0} & \mathbf{0}\\
\mathbf{0}& \mathbf{D} & \mathbf{0}\\
\mathbf{0} & \mathbf{0} & \mathbf{0}
\end{array}\right]
V^{-1},\\
\mathbf{D} = \operatorname{diag} (d_1, \ldots , d_n),
\quad \text{w/ } d_i > 1,  \, \, d_i \, | \,  d_{i+1}.
\end{split}
\end{equation}
The blocking in~\eqref{eq:A-SNF} is chosen so that every row of $\mathbf{U}_1$ lies in the left-nullspace of $\mathbf{A}.$
Similarly, each row of $\mathbf{U_2}$ becomes a left null vector of $\mathbf{A}$ after reduction modulo some $d_i.$
We first consider continuous symmetries arising from $\mathbf{U}_1.$
\begin{defn}\label{def:continuous-scaling-symmetries}
Fix $f, F,$ and $\mathbf{U}_1 \in \ZZ^{r \times (n+m)}$ as above, and consider the rows of $\mathbf{U}_1,$ denoted $\mathbf{u}_1^T, \ldots , \mathbf{u}_r^T \in \ZZ^{1\times (n+m)}$.
We define an associated group of \emph{continuous scaling symmetries} $\Gcon$ to be the image of the  homomorphism 
\begin{align*}\label{eq:homomorphism-free}
(\mathbb{C}^*)^r &\to (\CC^*)^{n+m}\\
(\lambda_1, \ldots , \lambda_r) &\mapsto \lambda_1^{\uu_1^T} \str \cdots \str \lambda_r^{\uu_r^T}.
\end{align*}
\end{defn}
As an abstract group, we have $\Gcon \cong (\CC^*)^r$, and this group acts on $\CC^{n+m}$ and $\CC^m$ via the coordinate-wise product $\str$ as in~\eqref{eq:scale-actions}.
In fact, $\Gcon $ also acts on $X$.
To see this, first observe for any point $(\xx, \pp) \in X \subset \VV (F)$ that orbits $\Gcon \cdot (\xx ,\pp )$ are all contained in $\VV (F)$ by construction.
On the other hand, $(1, \ldots , 1) \cdot (\xx , \pp) \subset X,$ so by connectivity we may conclude $\Gcon \cdot (\xx ,\pp ) \subset X.$
Thus, the branched cover $f$ is equivariant with respect to the action of the connected algebraic group $\Gcon$.
By~\Cref{prop:deck-commute}, it follows that any deck transformation $\Psi \in \deck (f)$ must commute with the action of $\Gcon .$
This implies that the coordinate functions of $\Psi$ are \emph{quasi-homogeneous.}

\begin{defn}\label{def:quasihomogeneous-continuous}
A rational function $\frac{p(\xx)}{q(\xx)} \in \CC(X)$ is said to be quasi-homogeneous with respect to a continuous scaling $\uu$ if there exists $d\in \ZZ$ with
    \[ \frac{p(\lambda^{\uu} \str \xx)}{q(\lambda^{\uu} \str \xx)} = \lambda^d\cdot \frac{p(\xx)}{q(\xx)} \quad \quad \forall \xx \in X, \, \, \forall \lambda \in \CC^{*}. \]
\end{defn}
We may verify that a quasi-homogeneous rational function can be represented as the quotient of two quasi-homogeneous polynomials.
\begin{prop}\label{prop:homogeneous-rational-continuous}
 Consider a nonzero rational function on an irreducible affine variety $X,$ represented as $\frac{p(\xx)}{q(\xx)}$ where $p$ and $q$ are both polynomials that do not vanish on $X.$ 
 If the function is quasi-homogeneous with respect to a free scaling $\uu$, then it can be represented as $\frac{a(\xx)}{b(\xx)}$ on $X$, where both $a(\xx)$ and $b(\xx)$ are both quasi-homogeneous polynomials with respect to $\uu$. 
 Moreover, we may assume all monomials in $a$ (resp.~$b$) occur in $p$ (resp.~$q$),
\[ \supp(a(\xx)) \subseteq \supp(p(\xx)), \quad \supp(b(\xx)) \subseteq \supp(q(\xx)). \]
\end{prop}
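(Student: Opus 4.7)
The plan is to decompose $p$ and $q$ into their quasi-homogeneous components with respect to the $\ZZ$-grading induced by $\uu$ on $\CC[\xx]$, where a monomial $\xx^{\al}$ has degree $\uu \cdot \al$. Write
\[ p(\xx) = \sum_{k} p_k(\xx), \qquad q(\xx) = \sum_{k} q_k(\xx), \]
where each $p_k$ (resp.~$q_k$) is the quasi-homogeneous component of degree $k$, so that $p_k(\lambda^{\uu} \str \xx) = \lambda^k p_k(\xx)$ as a polynomial identity in $\xx$ and $\lambda$, and similarly for $q_k$. By construction, $\supp(p_k) \subseteq \supp(p)$ and $\supp(q_k) \subseteq \supp(q)$ for every $k$. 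The candidates for $a$ and $b$ will be among these graded pieces.

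Next I would translate the hypothesis into a polynomial identity. Clearing denominators in the condition of~\Cref{def:quasihomogeneous-continuous} gives
\[ q(\xx)\, p(\lambda^{\uu} \str \xx) \;=\; \lambda^d\, p(\xx)\, q(\lambda^{\uu} \str \xx) \qquad \text{for all } \xx \in X, \, \lambda \in \CC^*. \]
Substituting the graded decompositions of $p$ and $q$ and collecting coefficients of $\lambda^k$, both sides become Laurent polynomials in $\lambda$ whose coefficient functions depend on $\xx \in X$. For each fixed $\xx \in X$ the resulting polynomial in $\lambda$ vanishes for infinitely many values of $\lambda$ and is therefore identically zero, which forces
\[ q(\xx)\, p_k(\xx) \;=\; p(\xx)\, q_{k-d}(\xx) \qquad \text{on } X, \quad \text{for every } k \in \ZZ. \]

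To finish, since $q$ does not vanish on $X$ and $q = \sum_k q_k$, there must exist some index $k_0$ with $q_{k_0}$ not vanishing on $X$. I would then set
\[ a(\xx) := p_{k_0 + d}(\xx), \qquad b(\xx) := q_{k_0}(\xx). \]
Both are quasi-homogeneous with respect to $\uu$ by construction, and their supports are contained in those of $p$ and $q$ respectively. The identity $q \cdot a = p \cdot b$ on $X$, combined with the fact that $p$, $q$, and $b$ are nonzero on $X$, implies that $a$ is nonzero on $X$ as well, and that $a/b = p/q$ as rational functions on $X$.

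The main subtlety is the passage from a pointwise equation in $\lambda$ to the coefficient-wise identities on $X$; this is harmless because a univariate polynomial over $\CC$ with infinitely many roots is zero, but it must be applied pointwise and then re-read as an identity of regular functions. A second point requiring a little care is verifying that such a $k_0$ exists, which follows immediately from the fact that if every $q_k$ vanished on $X$ then $q$ itself would vanish on $X$, contradicting the hypothesis. No other step requires more than bookkeeping with the grading.
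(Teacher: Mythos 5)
Your proposal is correct and follows essentially the same route as the paper's proof: expand $p(\lambda^{\uu}\str\xx)$ and $q(\lambda^{\uu}\str\xx)$ into quasi-homogeneous components, clear denominators, use the fact that a Laurent polynomial in $\lambda$ with infinitely many roots vanishes identically to extract the cross-multiplied identities $q\,p_k = p\,q_{k-d}$ on $X$, and select a component $q_{k_0}$ not vanishing on $X$. Your indexing by all degrees $k$ is slightly cleaner than the paper's explicit matching of the occurring exponents $r_i$ and $s_j+d$, but the underlying argument is the same.
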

\begin{proof}
The univariate Laurent polynomial ring $\CC [\xx ][\lambda^{-1}]$ is equipped with a natural $\mathbb{Z}$-grading with respect to degrees in $\lambda .$
Let
    \[ p(\lambda^\uu \str \xx) = \sum_{i=1}^{h_1} \lambda^{r_i}p_i(\xx), \quad q(\lambda^\uu \str \xx) = \sum_{i=1}^{h_2} \lambda^{s_i}q_i(\xx), \]
    be expressions of $p(\lambda^\uu \str \xx)$ and $q(\lambda^\uu \str \xx)$ in homogeneous components with respect to this grading. Note that the integers $r_i$ (resp. $s_i$) are distinct, and 
    without loss of generality we may assume that none of the $p_i$ or $q_i$ vanish on $X$. From the identities
    \[ \frac{\sum_{i=1}^{h_1} \lambda^{r_i}p_i(\xx)}{\sum_{i=1}^{h_2} \lambda^{s_i}q_i(\xx)} = \frac{p(\lambda^{\uu} \str \xx)}{q(\lambda^{\uu} \str \xx)} = \lambda^d\cdot\frac{p(\xx)}{q(\xx)}, \]
    let us define
    \begin{equation} \label{eq:hom_rat}
    f(\lambda, \xx) := \sum_{i=1}^{h_1}\lambda^{r_i}p_i(\xx)q(\xx) - \sum_{i=1}^{h_2}\lambda^{s_i+d}q_i(\xx)p(\xx) \in \CC [\xx ][\lambda^{-1}].
    \end{equation}
Clearly we have $f(\lambda , \xx)=0$ for all $ \xx \in X$ and $\lambda \in \CC^{*}$.
    Let $t_i = s_i+d$ for all $i \in [h_2]$. Suppose that there exists $k \in [h_1]$ such that for all $j \in [h_2]$ we have $r_k \neq t_j$. Then we can rewrite \eqref{eq:hom_rat} as
    \[ f(\lambda, \xx) = \sum_{\substack{e_i \in \{ r_i \} \cup \{ t_i\}\\ e_i \ne r_k} } \lambda^{e_i}f_i(\xx) + \lambda^{r_k}p_k(\xx)q(\xx) = 0, \quad \forall \xx \in X, \, \forall \lambda \in \CC^{*},
    \]
    for suitable polynomials $f_i(\xx)$ with $e_i$ are all distinct. 
%    Notice that since $e_i$ are produced from $r_i$'s and $t_i$'s, we have $r_k \neq e_i$ for all $i \in [w]$. 
Since a univariate Laurent polynomial that evaluates to zero at infinitely many points is the zero polynomial, we obtain a contradiction
    \[ p_k(\xx)q(\xx) = 0 \;\; \forall \xx \in X \iff p_k(\xx) = 0 \;\; \forall \xx \in X \text{ or } q(\xx) = 0 \;\; \forall \xx \in X. \]
Thus, for every $i \in [h_1]$ there exists $j \in [h_2]$ such that $r_i = t_j$. By symmetry, for every $j \in [h_2]$ there exists $i \in [h_1]$ such that $r_i = t_j$. In particular, $h_1 = h_2$. 
If we consider now the set $
H = \{ (i,j) \in [h_1]^2 \mid r_i = t_j \},$
then we can rewrite~\eqref{eq:hom_rat} as
    \[ f(\lambda, \xx) = \sum_{(i,j) \in H} \lambda^{r_i} \Big(p_i(\xx)q(\xx) - q_j(\xx)p(\xx)\Big) = 0 \]
    Again, since $r_i$ are all distinct, we have
    \[ p_i(\xx)q(\xx) - q_j(\xx)p(\xx) = 0, \quad \forall \xx \in X, \, \forall (i,j) \in H. \]
    Thus, for any $(i,j) \in H,$ the rational function $p(\xx) / q(\xx)$ has an equivalent representation $p_i (\xx) / q_j (\xx)$.
    Taking $(a,b) = (p_i, q_j)$ gives the desired conclusion.
\end{proof}
We now turn our attention to \emph{discrete} scaling symmetries arising from the submatrix $\mathbf{U}_2$ appearing in the Smith Normal Form~\eqref{eq:A-SNF}.
These will form a finite group $\Gdis$, defined analogously to $\Gcon .$ 
However, to do this we must consider some technicalities not present in the continuous case.
Write
\begin{equation}\label{eq:blocking-U2}
\mathbf{U}_2 = 
\begin{bmatrix}
\mathbf{U}_{2,1} \\
\hline
\vdots \\
\hline
\mathbf{U}_{2,k} 
\end{bmatrix},
\end{equation}
where the blocking is chosen so that each $\mathbf{U}_{2,i} \in \ZZ^{r_i \times (n+m)}$ 
corresponds to a distinct elementary divisor $d_i$ in $\mathbf{D}.$

For each of the blocks $\mathbf{U}_{2, i}$ in~\eqref{eq:blocking-U2}, let 
the rows of this matrix after reduction modulo $d_i$
be denoted by $\uu_{i,1}^T, \ldots , \uu_{i,r}^T \in \ZZ_{d_i}^{1\times (n+m)}$.
Write $U_i$ for the group of $d_i$-th roots of unity in $\CC^*$, and consider the homomorphism 
\begin{align}\label{eq:homomorphism-torsion}
\displaystyle\prod_{i=1}^k U_i^{r_i} &\to (\CC^*)^{n+m}\\
\nonumber
\left(
(\lambda_{1,1}, \ldots , \lambda_{1,r_1}),
\ldots , 
(\lambda_{k,1}, \ldots , \lambda_{k,r_k})
\right)
&\mapsto \lambda_1^{\uu_{1,1}^T} \str \cdots \str \lambda_1^{\uu_{1,r_1}^T}
\str \cdots \str
\lambda_k^{\uu_{k,r_k}}.
\end{align}

\begin{defn}\label{def:discrete-scaling-symmetry}
The group of \emph{discrete scaling symmetries} associated to $f$ and $F$ consists of all scalings in the image of~\eqref{eq:homomorphism-torsion} that map $X$ to itself and commute with all deck transformations in $\deck (f).$   
\end{defn}

Abstractly, $\Gdis$ is a finite abelian group, $\Gdis \cong \displaystyle\prod_{i=1}^{k} \ZZ_{d_i}^{r_i}$. 
As in the continuous case, $f$ is $\Gdis$-equivariant.
However, both requirements that elements of $\Gdis$ map $X$ to itself and commute with $\deck (f)$ are now nontrivial.

\begin{example}\label{ex:discrete-symmetry-pathologies}
Consider the system, with $(n,m) = (4,3),$ defined by
\[
F =
\begin{bmatrix}
2 x_1^2 + 1\\
x_{2}+2\,x_{1}x_{3}+p_{1}\\ 
3 x_3^2 - x_4^2 - 4\,p_{1}x_{1}x_{3}-2p_{2}\\
x_1 x_3^3 + 3 x_1 x_3 x_4^2 + p_1 x_3^1 + p_1 x_4^2 - 2 p_2 x_1 x_3 - 2 p_3
\end{bmatrix}.
\]
The basic idea behind constructing the system $F$ in this example is to take a Galois cover with Galois group $S_3,$ and apply a birational change of coordinates so that one of its deck transformations becomes a scaling.
The following Macaulay2~\cite{M2} code was used:
\begin{verbatim}
FF = frac(QQ[p_1..p_3, x_1]/ideal(2*x_1^2+1));
R = FF[x_2..x_4];
I = ideal apply({x_2, x_3, x_4}, v -> v^3 + p_1 * v^2 + p_2 * v + p_3);
J = saturate(I, ideal((x_2-x_3)*(x_2-x_4)*(x_3-x_4)));
phi = map(R, R, {x_2, x_1*(x_3+x_4), x_1*(x_3-x_4)});
K = phi J;
scale = map(R, R, {x_2, x_3, -x_4});
F = apply(K_*, p -> p + scale p)
\end{verbatim}
The variety $\VV (F)\subset \CC^7$ has the irreducible decomposition
\[
\VV (F) = \left(\VV (F) \cap \VV (x_1 + \sqrt{-1/2}) \right) \cup \left( \VV (F) \cap \VV (x_1 - \sqrt{-1/2}) \right).
\]
Let $X$ be the first of these irreducible components.
We see that the scaling on $\CC^7$ that sends $x_1 \to - x_1$ and fixes all other coordinates does not map $X$ to itself.
Moreover, by construction we have that the scaling on $X$ that sends $x_4\to -x_4$ and fixes all other coordinates does not commute with the full deck transformation group $\deck (f) \cong S_3.$
Since neither symmetry belongs to $\Gdis ,$ this shows why both requirements of~\Cref{def:continuous-scaling-symmetries} are necessary.
\end{example}

% \begin{algorithm}
% \DontPrintSemicolon
%   \KwInput{$f,F,$ and generic sample $(\xx_0, \pp_0) \in X$ as in~\Cref{alg:deck-interpolation}}
%   \KwOutput{The group $\Gdis$ of~\Cref{def:discrete-scaling-symmetry}.}
%   \Return \tim{complete me---or see text below}
% \caption{Computing the discrete scaling symmetries $\Gdis$.}\label{alg:detect-discrete-scaling-symmetries}
% \end{algorithm}
\Cref{ex:discrete-symmetry-pathologies} underscores the need for a procedure for computing $\Gdis .$
We now summarize a ``probability-one" procedure based on homotopy continuation that accomplishes this task.
For each elementary divisor $d_i$, consider all $\mathbb{Z}_{d_i}$-linear combinations of the modulo-$d_i$ reduced rows of $\mathbf{U}_{2,i}$. For each linear combination $\uu \in \ZZ_{d_i}^{n+m},$ we check if the associated scaling commutes with each deck transformation. This is done with a probability-one homotopy test---generate random intermediate parameter values $\pp_1\in \CC^m$ and track along two linear segment parameter homotopies---first from $\pp_0$ to $\pp_1$, then from $\pp_1$ to $\lambda^\uu \str \pp_0.$\footnote{Intermediate parameters are used because $\pp_0$ and $\lambda^\uu \str \pp_0$ are not in general position with respect to each other. Alternatively, the $\gamma$-trick can be used, cf.~\cite{coeffParam}.}
By~\Cref{cor:deck_after_tracking}, the following holds with probability-one: the discrete scaling $\uu$ commutes with $\Psi \in \deck (f)$ if and only if the endpoint obtained by tracking $\Psi (\xx_0)$ along the homotopies is the same as the endpoint obtained by
first tracking the start point $\xx_0$ and then applying $\Psi $.
This probability-one test succeeds for all $\Psi \in \deck (f)$ if and only if $\uu$ determines an element of $\Gdis .$

\subsection{Quasi-homogeneous interpolation}\label{subsec:quasi-interp}

Retaining the notation established earlier in the section, we now consider the \emph{multidegree map}, ie.~the group homomorphism $\ZZ^{n+m} \to \ZZ^r \times \prod_{i=1}^k \ZZ_{d_i}^{r_i'}$ specified by matrices
\begin{equation}\label{eq:compatible-Us} (\mathbf{U}_1, \mathbf{U}_{2,1}', \ldots, \mathbf{U}_{2,k}') \in \ZZ^{r\times(n+m)} \times \prod_{i=1}^k \ZZ_{d_i}^{r_i'\times(n+m)}, \end{equation}
where $r_i' \le r_i$ and the $\ZZ_{d_i}$-rowspan of each $\mathbf{U}_{2,i}'$ is contained in the $\ZZ_{d_i}$-rowspan of $\mathbf{U}_{2,i}.$
We assume that the multidegree map is \emph{compatible} with $f$ in the sense that the deck transformations commute with the group of scaling symmetries in $\Gcon \times \Gdis$, obtained by applying $\operatorname{Hom} (\bullet , \CC^*)$ to the image of~\eqref{eq:compatible-Us}.
The procedure for computing $\Gdis $ described above allows us to easily determine a set of maximally compatible $\mathbf{U}_{2,1}', \ldots, \mathbf{U}_{2,k}'$ from $\mathbf{U}_{2,1}, \ldots , \mathbf{U}_{2,k}.$

\begin{algorithm}
\small 
\DontPrintSemicolon
  \KwInput{$F = (f_1, \ldots , f_n)$, $(\xx^*, \pp^*)$, $D^*$ as in~\Cref{alg:deck-interpolation}, and the $f$-compatible multidegree map specified by $\mathbf{U}=(\mathbf{U}_1, \mathbf{U}_{2,1}', \ldots, \mathbf{U}_{2,k}')$ as in~\eqref{eq:compatible-Us}.}
  \KwOutput{As in~\Cref{alg:deck-interpolation}}
  $(x^{(1)}, \dots, x^{(d)}), \mathrm{Mon}(f,\pp^*) \gets$ \texttt{run\char`_monodromy}($F, \xx^*, \pp^*$)\\
  $\{\sigma_1,\dots,\sigma_q\} \gets \mathrm{Cent}_{S_d}(\mathrm{Mon}(f, \pp^*))$ \tcp*[f]{WLOG $q>1$, $\sigma_1 = \textrm{id}$}
  \For{$i \gets 2;\ i \leq q;\ i \gets i + 1$}{
    $\Psi_i \gets \matrix{\textbf{missing} & \dots & \textbf{missing}}^\top$
  }
  \For{$D \gets 1;\ D \leq D^*;\ D \gets D + 1$}{
    $M \gets$ monomials($\xx$, $\pp$, $D$) \tcp*{up to total degree $D$}
    $C \gets$ classes($M, \mathbf{U}$) \tcp*{dictionary of monomial classes}
    $t \gets \max\, \{ \# m \;|\; (d, m) \in C \}$ \tcp*{size of the largest class}
    Track the orbit $\deck(f) \cdot x^{(1)}$ to $2t$ random instances of $F$
    \For(\tcp*[f]{iterate through all classes}){($deg_n, mon_n$) $\in$ C}{
        $t_n \gets \# mon_n$ \tcp*{size of the class for the numerator}
        \For{$i \gets 1;\ i \leq n;\ i \gets i + 1$}{
          $deg_d \gets$ \texttt{denominator\char`_degree}($\mathbf{U}, deg_n, i$) 
        \If{$deg_d$ $\in$ keys(C)}{
        $mon_d \gets$ \texttt{get\char`_value}($C, deg_d$)\\
        $t_d \gets \# mon_d$}
          \For{$j \gets 2;\ j \leq q;\ j \gets j + 1$} {
          \If{$\Psi_{j_i} \mathrm{\;is \; \mathbf{missing}}$}{
            $\mathbf{A} \gets$ $(t_n+t_d) \times (t_n+t_d)$ Vandermonde matrix\\
            \indent $\quad \xx_k ' = \sigma_j \cdot \xx_k$ for $k=1,\ldots , t_n+t_d$ \\
            $\mathbf{N} \gets$ nullspace($\mathbf{A}$)\\
            $\wt{\mathbf{N}} \gets$ rref($\mathbf{N}^\top$)\\
            $\matrix{\aa^\top & \bb^\top}^\top \gets$ \texttt{get\char`_representative}$(\wt{\mathbf{N}})$
            \If{$\matrix{\aa^\top & \bb^\top}^\top \mathrm{is\; not \; \mathbf{nothing}}$}{
              $\Psi_{j_i} \gets \frac{\sum_{k=1}^{t_n} a_k\cdot(\xx,\pp)^{\al_k}}{\sum_{k=1}^{t_d} b_k\cdot(\xx,\pp)^{\be_k}}$
            }
          }
          }
        }
        }
                \If{\text{all} $\Psi_i$ \text{are interpolated}}{
        \Return $\{\Psi_1,\dots,\Psi_q\}$
    }
  }
  \Return $\{\Psi_1,\dots,\Psi_q\}$
\caption{Quasi-homogeneous interpolation of $\deck (f)$}\label{alg:graded-deck-interpolation}
\end{algorithm}

Our approach to interpolation of quasi-homogeneous deck transformations is summarized by the pseudocode of~\Cref{alg:graded-deck-interpolation}.
The overall structure is much the same as~\Cref{alg:deck-interpolation}.
The main difference, responsible for the improved performance, is that a potentially much smaller basis of monomials for interpolation is chosen, so as to incorporate the quasi-homogeneity of $\deck (f)$ encoded by the symmetry group $\Gcon \times \Gdis .$
An even more refined strategy, not pursued here, would be to consider quasi-homogeneity at the level of individual deck transformations, or even their coordinate functions.
Nevertheless, experiments in~\Cref{sec:ex} show that working with just the symmetries in $\Gcon \times \Gdis $ already produces considerable savings.

On line 8 of~\Cref{alg:graded-deck-interpolation}, the set of all monomials of degree $\le D^*$ is divided into monomial \emph{classes}.
The sizes of these classes govern complexity of each interpolation step. 
Each class consists of key-value pairs, where the values are monomials whose corresponding key records the multidegree of the numerator of some candidate rational function representation.
The multidegree of the denominator in such a representation is uniquely determined by the class.
Pseudocode for computing the multidegree of the denominator from a given class is provided in~\Cref{alg:numer_deg2denom_deg} (according to the $\Gcon \times \Gdis$-equivariance of $\deck(f)$~\eqref{eq:equivariant-deck}).

\begin{algorithm}[H]
\DontPrintSemicolon
  \KwInput{Matrices $\mathbf{U}_1, \mathbf{U}_{2,1}', \ldots, \mathbf{U}_{2,k}' \in \ZZ^{r\times(n+m)} \times \prod_{i=1}^k \ZZ_{d_i}^{r_i'\times(n+m)}$ specifying a multidegree map compatible with $f$, a candidate numerator multidegree $(\mathbf{n}_0, \mathbf{n}_1, \dots, \mathbf{n}_k) \in \ZZ^r \times \prod_{i=1}^k \ZZ_{d_i}^{r_i'}$, and $j$ indexing the rational function $\psi_j$ in $\Psi \in \deck (f)$}
  \KwOutput{The corresponding denominator multidegree, $(\mathbf{d}_0, \mathbf{d}_1, \dots, \mathbf{d}_k) \in \ZZ^r \times \prod_{i=1}^k \ZZ_{d_i}^{r_i'}$}
  \Return $(\mathbf{n}_0- (\mathbf{U}_1)_{[:, j]}, \, \mathbf{n}_1-(\mathbf{U}_{2,1}')_{[:, j]}, \, \dots, \,  \mathbf{n}_k-(\mathbf{U}_{2,k}')_{[:, j]})$

\caption{Subroutine called on line 14 of~\Cref{alg:graded-deck-interpolation}.}\label{alg:numer_deg2denom_deg}
\end{algorithm}

\section{Examples and Experiments}\label{sec:ex}

%\tim{Explain---what the package is, dependencies (HomotopyContinuation.jl, OSCAR, GAP), basic functionality we refer to the README pages. We can mention it is under active development, so more changes (eg.~for efficiency) are anticipated.)}

Our implementation of~\Cref{alg:deck-interpolation,alg:graded-deck-interpolation} is written in the Julia programming language. 
It depends on the following packages: 
\begin{enumerate}
    \item \texttt{HomotopyContinuation.jl}\cite{HCJL},
    \item \texttt{AbstractAlgebra.jl}, part of the Nemo system~\cite{nemo}, and
    \item \texttt{GAP.jl}, part of the OSCAR system~\cite{OSCAR}.
\end{enumerate}
Basic instructions for using the code (which is currently under active development) can be found at the link:
{\textcolor{magenta}{\url{https://multivariatepolynomialsystems.github.io/DecomposingPolynomialSystems.jl/dev/}}}.

All timings reported were obtained with a 2022 Mac M1 with 8GB RAM.

\subsection{Five-point relative pose}\label{subsec:five-point-problem}

One of the most well-known minimal problems in computer vision is the classical five-point problem as shown in~\Cref{fig:5pp}. While many solvers exist for this problem~\cite{nister}, and the symmetry is well-known, this section aims to show how the methods in this paper can recover this symmetry without any \emph{a priori} knowledge.

\begin{figure}
\centering
%\resizebox{\textwidth}{!}{\includegraphics{journal_paper/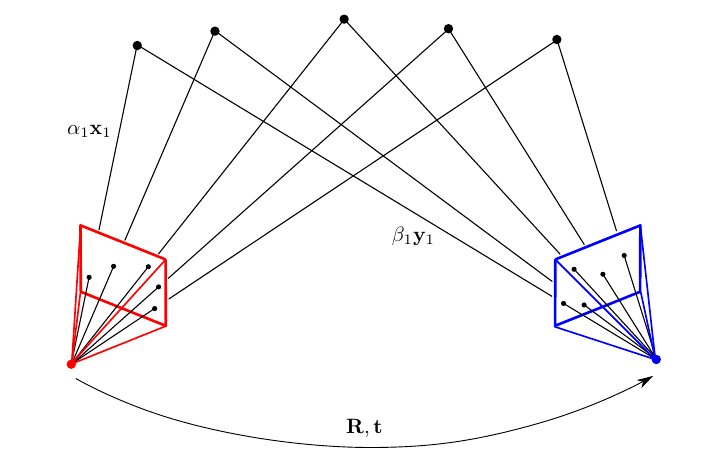}}
\includegraphics[scale=0.8]{figs/5pt.pdf}
\caption{Geometry of  five-point relative pose.}
\label{fig:5pp}
\end{figure}

We consider two slightly different formulations of the five-point problem.
\Cref{subsubsec:5pp-inhomog} presents an ``inhomogeneous" formulation that appeared previously in the conference paper~\cite{us_ISSAC23}.
For this formulation, we were able to recover only the coordinate functions of the deck transformation which are parameter-independent.
\Cref{subsubsec:5pp-quasihomog} shows how incorporating scaling symmetries in~\Cref{alg:graded-deck-interpolation} leads to a much more tractable problem, in which the full deck transformation can be recovered.
A summary of these experiments is given in~\Cref{tab:5pp} below---we refer to the corresponding subsections for details.

\begin{table}[H]
    \footnotesize
    \centering
    \begin{tabular}{l  c c c} \toprule
         & Inhom & Quasi-hom & Quasi-hom \\
         & $\&$ & $\&$ & $\&$ \\
         & Param-indep & Param-indep & Param-dep \\
        \midrule
        Formulation & \eqref{eq:5pp-inhom} & \eqref{eq:5pp-quasi-hom} & \eqref{eq:5pp-quasi-hom} \\
        Number of solutions & 20 & 20 & 20 \\
        Degree bound $D^*$ & 3 & 3 & 3 \\
        Monodromy time & 5sec & 5sec & 5sec \\
        Tracking time & 1min & $<$1sec & $<$1sec \\
        Interpolation time & 20min & $<$1sec & 5sec \\
        Largest Vandermonde matrix & $4600 \times 4600$ & $32 \times 32$ & $72 \times 72$ \\
        Number of tracked paths & $2 \times 4600$ & $2 \times 32$ & $2 \times 72$\\ 
        Depths $\alpha_1, \ldots , \beta_5$ recovered? & no & no & yes \\
        \bottomrule
    \end{tabular}
    \caption{Timings/details for recovering deck transformation of the five-point problem.}
    \label{tab:5pp}
\end{table}

\subsubsection{Inhomogeneous formulation}\label{subsubsec:5pp-inhomog}

We first consider the following setup.
There are $5$ correspondences between $2$D image points $\xx_1 \leftrightarrow \yy_1, \ldots , \xx_5 \leftrightarrow \yy_5$. These $2$D data points are $2\times 1$ vectors, and are assumed to be images of $5$ world points under two calibrated cameras, where the two camera frames differ by a rotation $\R$ and a translation $\tran.$  The task for this problem is to solve for the relative orientation $\cam{\R}{\tran} \in \SERR (3) $ between the two cameras and each of the five points in $3$D space, as measured by their depths with respect to the first and second camera frames.

Writing $\alpha_1 , \ldots , \alpha_5$ for the depths with respect to the first camera and $\beta_1,\ldots , \beta_5$ for the depths with respect to the second camera, solutions to the five-point problem must satisfy a system of polynomial equations and inequations:
\begin{equation} \label{eq:5pp-inhom}
    \begin{split}
        \R^\top\R = \I, \quad \det \R = 1,
        \\
        \beta_i\matrix{\yy_i \\ 1} = \R\alpha_i\matrix{\xx_i \\ 1} + \mathbf{t}, \quad \forall \,  i = 1,\dots,5,
        \\
        \mathbf{t} \neq \mathbf{0}.
    \end{split}
\end{equation}

The unknown depths and translation $\mathbf{t}$ are defined in projective space, meaning $\mathbf{t}, \alpha_1, \ldots , \alpha_5, \beta_1, \ldots , \beta_5$ can only be recovered up to a common scale factor.  One option to remove this ambiguity is to treat these unknowns as homogeneous coordinates on a $12$-dimensional projective space, then for generic data $\xx_1,\ldots , \xx_5, \yy_1, \ldots , \yy_5,$ there are at most finitely many solutions in $(\R, \mathbf{t}, \alpha_1, \ldots, \alpha_5, \beta_1,\ldots , \beta_5) \in \SOCC (3) \times \PP_\CC^{12}$ to the system~\eqref{eq:5pp-inhom}.  This finiteness is what creates the minimal problem structure.  In practice, these solutions may be computed by working in a fixed affine patch of $\PP = \PP_\CC^{12}$ such that the inequality $\t \neq \mathbf{0}$ is satisfied (e.g., $\mathbf{a}^\top\t = 1$ for a random $\mathbf{a} \in \CC^3$).

There are exactly $20$ solutions over the complex numbers for generic data in $Z = \left( \CC^2 \right)^5 \times \left( \CC^2 \right)^5.$ 
The solutions to~\eqref{eq:5pp-inhom} are naturally identified with the fibers of a branched cover $f:X \to Z$, where 
\begin{equation*}
    \begin{split}
        X = \{ \left(\R, (\mathbf{t}, \alpha_1, \ldots , \beta_5), (\xx_1,\ldots ,  \ldots , \yy_5) \right)
        \in \SOCC (3) \times \PP_\CC^{12} \times Z \mid 
\eqref{eq:5pp-inhom} \text{ holds }\}.
    \end{split}
\end{equation*}
% Alternatively, the problem may be formulated using a branched cover between affine spaces of the same dimension $\CC^{20} \dashrightarrow \CC^{20} $, eg.~using \emph{Cayley's parametrization} $\CC^3 \dashrightarrow \SO_\CC (3)$ .
With our chosen formulation, the branched cover $f$ has a single deck transformation $\Psi$ known as the \emph{twisted pair}, whose coordinate functions are

\begin{equation}
\label{eq:twisted-pair}
\begin{split}
\Psi_\R &= \left( \frac{2}{\normsq{\tran}} \tran \tran^\top \, - \I\right)\, \R    ,
\\
\Psi_\t &= \t ,
\\
\Psi_{\alpha_i} &= \displaystyle\frac{-\alpha_i \norm{\tran}^2 }{2 \, \langle \t, \beta_i\yy_i \rangle - \normsq{\tran}} ,
\\
\Psi_{\beta_i} &= 
\displaystyle\frac{\beta_i \norm{\tran}^2 }{2 \, \langle \t, \beta_i\yy_i \rangle - \norm{\tran}^2} ,
\\
\Psi_{\xx_i} &= \xx_i ,
\\
\Psi_{\yy_i} &= \yy_i .
\end{split}    
\end{equation}

\begin{figure}
\resizebox{\textwidth}{!}{\includegraphics{}}
\caption{Twisted pair symmetry of the five-point problem.}
\label{fig:twisted_pair}
\end{figure}

We see that $\Psi $ consists of coordinate functions of total degree at most $3.$
The effect of this deck transformation on solutions to the five-point problem is illustrated in~\Cref{fig:twisted_pair}.
The coordinate functions $\Psi_{\R}, \Psi_{\t}$ are parameter-independent, whereas the $\Psi_{\alpha_i}, \Psi_{\beta_i}$ are parameter-\emph{dependent.}

We ran~\Cref{alg:deck-interpolation} on the formulation \eqref{eq:5pp-inhom} with the upper bound for the total degree $D^* = 3$. 
However, when running~\Cref{alg:deck-interpolation}, we considered only the parameter-independent setting, for which $t = {22 + 3 \choose 3} = 2300$.
In the parameter-dependent setup, we would have $2t = 2{22 + 20 + 3 \choose 3} = 28380$ coefficients to interpolate.
This exceeded the capacity of our machine.

The computation described above succeeded in recovering $\Psi_{\R}$ and $\Psi_{\t}$ in~\eqref{eq:twisted-pair}.
For the coordinate functions $\Psi_{\alpha_i}, \Psi_{\beta_i},$  no reasonable representative was found---all rows of $\wt{\mathbf{N}}$ were such that $\aa \approx \mathbf{0}$ or $\bb \approx \mathbf{0}.$ 
These coordinate functions remain ``missing'' in~\Cref{alg:deck-interpolation}.
This is expected---the expressions for these functions in~\eqref{eq:twisted-pair} are parameter-dependent.

Further details for this formulation are given in~\Cref{tab:5pp}.

% The experiment described above took approximately 20 minutes on our machine. 
% Due to the low number of solutions and relatively small value of $t,$ computing monodromy and tracking the solutions to the additional parameter values (lines 1 and 8) took only seconds. 
% The bottleneck of the algorithm in the case was computing the nullspaces of $\AA_j \in \CC^{4600\times4600}$ for $j = 1,\dots,22$. 
% In total, we sampled the solutions for $r = \ceil{\frac{2t}{d}} = \ceil{\frac{4600}{20}} = 230$ random instances using the ``batch'' strategy described at the end of~\Cref{sec:dense-interpolation}.

\subsubsection{Quasi-homogeneous formulation}\label{subsubsec:5pp-quasihomog}

In the quasi-homogeneous formulation we consider the image points to be the points in $\PP^2$, i.e. we introduce 10 more parameters and consider the system of polynomial equations and inequations:
\begin{equation} \label{eq:5pp-quasi-hom}
    \begin{split}
        \R^\top\R = \I, \quad \det \R = 1,
        \\
        \beta_i\yy_i = \R\alpha_i\xx_i + \mathbf{t}, \quad \forall \,  i = 1,\dots,5,
        \\
        \mathbf{t} \neq \mathbf{0}.
    \end{split}
\end{equation}
where $\xx_1, \ldots, \yy_5$ are now $3 \times 1$ parameter vectors.
The continuous scaling symmetries of the formulation \eqref{eq:5pp-quasi-hom} are given by

\begin{equation} \label{eq:5pp-free}
    \begin{split}
        (\alpha_i, \xx_i) &\mapsto (\lambda\cdot\alpha_i, \lambda^{-1}\cdot\xx_i), \\
        (\beta_i, \yy_i) &\mapsto (\lambda\cdot\beta_i, \lambda^{-1}\cdot\yy_i), \\
        (\t, \alpha_1, \ldots , \beta_5) &\mapsto \lambda\cdot(\t, \alpha_1, \ldots , \beta_5),
    \end{split}
\end{equation}
where $i=1, \ldots , 5,$ and hence $\Gcon \cong (\CC^*)^{11}.$
The free scalings are represented by $\mathbf{U}_1 \in \ZZ^{11\times52}$. 
Also, $\Gdis \cong \ZZ_2^4,$ with generators

\begin{equation} \label{eq:5pp-discrete}
    \begin{split}
        (\R, \t, \yy_1, \ldots, \yy_5) &\mapsto (\R_1\R, \R_1\t, \R_1\yy_1, \ldots, \R_1\yy_5), \\
        (\R, \xx_1, \ldots, \xx_5) &\mapsto (\R\R_2, \R_2^{-1}\xx_1, \ldots, \R_2^{-1}\xx_5), \\
        \R_1, \R_2 \in \bigg\{ \I, &\begin{bsmallmatrix*}[r] -1&0&0\\0&-1&0\\0&0&1 \end{bsmallmatrix*}, \begin{bsmallmatrix*}[r] -1&0&0\\0&1&0\\0&0&-1 \end{bsmallmatrix*}, \begin{bsmallmatrix*}[r] 1&0&0\\0&-1&0\\0&0&-1 \end{bsmallmatrix*} \bigg\}.
    \end{split}
\end{equation} 
In this example, all discrete scalings detected by the Smith Normal Form commute with $\Psi$.
Hence $r_1 = r_1'$ in the notation of~\Cref{subsec:quasi-interp}.
This commutativity can be detected with the probability-one homotopy test described in~\Cref{subsec:smith}.
This can also be understood \emph{a priori} because these symmetries are instances of a continuous, non-scaling symmetry expressed as in~\eqref{eq:5pp-discrete}, where $\R_1, \R_2 \in \SOCC (3)$ may be arbitrary rotations. 

The last column of~\Cref{tab:5pp} shows that the quasi-homogeneous approach allows us to run parameter-dependent interpolation of the twisted pair~\eqref{eq:twisted-pair}. Using line 8 of~\Cref{alg:graded-deck-interpolation} we partition the monomials up to total degree $3$ in both unknowns and parameters into classes w.r.t. the multidegree map given by $(\mathbf{U}_1, \mathbf{U}_{2,1}')$. We obtain $14339$ classes of monomials, the largest of which has size $36$. This partitioning makes parameter-dependent interpolation feasible.

We close our discussion of the five-point problem with a remark that the depths $\alpha_i, \beta_i$ and world points $\XX_i$ can easily be recovered from known rotations and translations $(\R , \t)$ (assuming generic data) using linear algebra.
In the homogeneous formulation, this is based on the relation
\begin{equation}\label{eq:fpp-focal}
\begin{bmatrix}
\mathbf{I} & \mathbf{0} & \xx_i & \mathbf{0} \\
\R & \t & \mathbf{0} & \yy_i
\end{bmatrix}
\begin{bmatrix}
\XX_i \\
-\alpha_i \\
-\beta_i
\end{bmatrix}
= \mathbf{0}.
\end{equation}

Thus, the five-point problem illustrates that interpolating deck transformations may be easier after eliminating certain variables.
On the other hand, our success with interpolating the twisted pair on depths and world points showcases the utility of techniques that exploit quasi-homogeneity, like~\Cref{alg:graded-deck-interpolation}, and selecting a formulation amenable to these techniques.
% for this problem.
%is what allows us to run the parameter-dependent interpolation on this problem (see \Cref{tab:5pp}).

%\subsubsection{Timings and details}

\subsection{Perspective 3-Point}\label{subsec:p3p}

Another famous algebraic problem associated with computer vision is the so-called P3P problem.
In fact, early studies of this problem date back centuries to Lagrange and Grunert---see~\cite{sturm2011historical} for a more complete history.

Similar to our discussion of the five-point problem, we may consider an inhomogeneous formulation of the problem as in~\Cref{subsubsec:5pp-inhomog},
\begin{equation} \label{eq:p3p-inhom}
    \begin{split}
        \R^\top\R = \I, \quad \det \R = 1,
        \\
        \alpha_i\matrix{\xx_i\\1} = \matrix{\R & \t}\matrix{\XX_i\\1}, \quad \forall \,  i = 1,\dots,3,
        \\
        \mathbf{n}^\top\XX_i = 1, \quad \forall \,  i = 1,\dots,3,
    \end{split}
\end{equation}
and a quasi-homogeneous formulation,
\begin{equation} \label{eq:p3p-quasi-hom}
    \begin{split}
        \R^\top\R = \I, \quad \det \R = 1,
        \\
        \alpha_i\xx_i = \matrix{\R & \t}\XX_i, \quad \forall \,  i = 1,\dots,3,
        \\
        \matrix{\mathbf{n}^\top & 1}\XX_i = 0, \quad \forall \,  i = 1,\dots,3.
    \end{split}
\end{equation}
In the above, the parameters consist three image points $\xx_i$ (in $\CC^2 $ or $\PP^2,$ respectively) and three world points $\XX_i$ (in $\CC^3$ or $\PP^3.$)

Unlike the five-point problem, in P3P the world points are known---this makes the former a \emph{relative pose} problem, and the latter an \emph{absolute pose} problem.
In addition to the unknwon rotations, translations, depths, and world points, there is an additional unknown vector $\mathbf{n}\in \CC^{3\times 1}$ which defines the normal to the plane spanned by $\XX_1, \ldots , \XX_3.$
We include the normal in our formulation because it reduces the total degree of the problem's single deck transformation, which is given by
\begin{equation}
\label{eq:p3p-deck-transfo}
\begin{split}
\Psi_\R &= \R\, \left( \frac{2}{\norm{\mathbf{n}}^2} \mathbf{n}\mathbf{n}^\top \, - \I\right)    ,
\\
\Psi_\t &= \frac{2}{\norm{\mathbf{n}}^2}\R\mathbf{n} - \t ,
\\
\Psi_{\alpha_i} &= -\alpha_i ,
\\
\Psi_{\mathbf{n}} &= \mathbf{n} ,
\\
\Psi_{\xx_i} &= \xx_i ,
\\
\Psi_{\XX_i} &= \XX_i .
\end{split}    
\end{equation}

The formulation \eqref{eq:p3p-quasi-hom} allows us to decompose the monomials into smaller classes, since its group of scalings is larger that of \eqref{eq:p3p-inhom}. Its continuous part is isomorphic to $(\CC^*)^7$ and is given by

\begin{equation} \label{eq:p3p-free}
    \begin{split}
        (\alpha_i, \xx_i) &\mapsto (\lambda\cdot\alpha_i, \lambda^{-1}\cdot\xx_i), \\
        (\xx_i, \XX_i) &\mapsto (\lambda\cdot\xx_i, \lambda\cdot\XX_i), \\
        (\t, \mathbf{n}, X_{1,4}, X_{2,4}, X_{3,4}) &\mapsto (\lambda\cdot\t, \lambda^{-1}\cdot\mathbf{n}, \lambda^{-1}\cdot X_{1,4}, \lambda^{-1}\cdot X_{2,4}, \lambda^{-1}\cdot X_{3,4}),
    \end{split}
\end{equation}
The discrete part has generators analogous to \eqref{eq:5pp-discrete}:

\begin{equation} \label{eq:p3p-discrete}
    \begin{split}
        (\R,\t,\xx_1,\ldots,\xx_3) &\mapsto (\R_1\R, \R_1\t, \R_1\xx_1, \ldots, \R_1\xx_3), \\
        (\R, \XX_{1,1:3},\ldots,\XX_{3,1:3}) &\mapsto (\R\R_2, \R_2^{-1}\XX_{1,1:3}, \ldots, \R_2^{-1}\XX_{3,1:3}), \\
        \R_1, \R_2 \in \bigg\{ \I, &\begin{bsmallmatrix*}[r] -1&0&0\\0&-1&0\\0&0&1 \end{bsmallmatrix*}, \begin{bsmallmatrix*}[r] -1&0&0\\0&1&0\\0&0&-1 \end{bsmallmatrix*}, \begin{bsmallmatrix*}[r] 1&0&0\\0&-1&0\\0&0&-1 \end{bsmallmatrix*} \bigg\}.
    \end{split}
\end{equation}
Thus, $\Gcon \cong (\CC^*)^7$ and $\Gdis \cong \ZZ_2^4.$

\begin{table}[H]
    \footnotesize
    \centering
    \begin{tabular}{l c c c} \toprule
         %& Inhom $\&$ Param-indep & Quasi-hom $\&$ Param-indep & Quasi-hom $\&$ Param-dep \\
         & Inhom $\&$ & Quasi-hom $\&$ & Quasi-hom $\&$ \\
         %& $\&$ & $\&$ & $\&$ \\
         & Param-indep & Param-indep & Param-dep \\
        \midrule
        Formulation & \eqref{eq:p3p-inhom} & \eqref{eq:p3p-quasi-hom} & \eqref{eq:p3p-quasi-hom} \\
        Number of solutions & 8 & 8 & 8 \\
        Degree bound $D^*$ & 3 & 3 & 3 \\
        Monodromy time & 1sec & 1sec & 1sec \\
        Tracking time & 7sec & $<$1sec & $<$1sec \\
        Interpolation time & 150sec & 1sec & 6sec \\
        Largest Vandermonde matrix & $2660 \times 2660$ & $50 \times 50$ & $50 \times 50$ \\
        Number of tracked paths & $2 \times 2660$ & $2 \times 50$ & $2 \times 50$ \\
        $\Psi$ fully recovered? & yes & yes & yes \\
        \bottomrule
    \end{tabular}
    \caption{Timings/details for recovering deck transformation of the P3P problem.}
    \label{tab:p3p}
\end{table}

\Cref{tab:p3p} summarizes results of running~\Cref{alg:deck-interpolation} and~\Cref{alg:graded-deck-interpolation} on the P3P formulations~\eqref{eq:p3p-inhom}and~\eqref{eq:p3p-quasi-hom}.
Both algorithms were able to successfully interpolate~\eqref{eq:p3p-deck-transfo} using the \emph{a priori} knowledge that this deck transformation is parameter-independent.
As expected, running~\Cref{alg:graded-deck-interpolation} on the quasi-homogeneous formulation is more efficient.
Additionally,~\Cref{alg:graded-deck-interpolation}, unlike~\Cref{alg:deck-interpolation}, succeeds when run with the parameter-dependent setting.
Intriguingly, the largest monomial class is the same in both parameter dependent and independent cases.

Once again, we find the importance of a carefully-chosen formulation can be key to recovering deck transformations.
One initial experiment in which $\mathbf{n}$ was eliminated from the quasi-homogeneous formulation~\Cref{eq:p3p-quasi-hom}, in which case the deck transformation has coordinate functions of total degree $D^*=5.$
Although the sizes of Vandermonde matrices returned by~\Cref{alg:graded-deck-interpolation} appeared to be reasonable, we were not able to robustly interpolate these degree-$5$ multivariate rational functions.  
We speculate this is due to the well-known fact that large Vandermonde matrices are ill-conditioned~\cite{Pan_Vandermonde,Higham}.

\subsection{Radial camera relative pose}\label{subsec:radial}

%\tim{add formula for Cayley. add some context for this problem---a solution to this problem appeared in BLAH, the symmetries are relevant to a practical solution. Initially worked out by hand --- \Cref{alg:graded-deck-interpolation} gives a fully-automatic derivation.}

Another problem in computer vision that has been recently tackled~\cite{Hruby_2023_CVPR} is the problem of 3D reconstruction or relative pose from 4 images made by a radial camera. 
A radial camera may be understood as a linear map $P: \PP^3 \dashrightarrow \PP^1,$ thus given by a $2\times4$ matrix. 

The radial camera $P$ associates a world point in $\PP^3$ with the \emph{radial line} passing through the center of distortion in an image and the projection of the world point under the usual pinhole model (as for the five-point relative pose problem). The center of distortion may be assumed to be $[0:0:1]\in \PP^2$, so that the equation of the radial line is parametrized by the projected image point $[u : v : 1]\in \PP^2$ as a direction vector, thus giving a point $\mathbf{l} = [u : v] \in \PP^1.$
With these assumptions, a pinhole camera $P_{\text{pin}} : \PP^3 \dashrightarrow \PP^2$ can be associated with a radial camera $P$ as follows:
\begin{equation}\label{eq:pinhole-2-radial}
P = \begin{bmatrix}
1 & 0 & 0\\
0 & 1 & 0
\end{bmatrix} \cdot P_{\text{pin}}.
\end{equation}
Assuming that $P_{\textrm{pin}}$ is calibrated results in the radial camera matrix
\[ \mathbf{P} = \matrix{\mathrm{Cay}(x,y,z) & \mathbf{t}} \in \CC^{2\times4}, \]
where
\begin{equation}
\begin{split}
 \mathrm{Cay} : \CC^3 &\dashrightarrow \CC^{2\times 3} \\
(x,y,z) &\mapsto 
\left[\begin{smallmatrix}
\frac{1 + x^2 - (y^2 + z^2)}{1+x^2+y^2+z^2} & \frac{2 (xy-z)}{1+x^2+y^2+z^2} & \frac{2 (xz + y)}{1+x^2+y^2+z^2}\\
\frac{2(xy+z)}{1+x^2+y^2+z^2} & \frac{1 + y^2 - (x^2 + z^2)}{{1+x^2+y^2+z^2}} & \frac{2 (yz - x)}{1+x^2+y^2+z^2}
\end{smallmatrix}\right].
\end{split}
\end{equation}
is the Cayley parametrization of the first 2 rows of a $3\times3$ camera rotation matrix and $\t \in \CC^2$ are the first 2 elements of the camera translation vector. As explained in~\cite{Hruby_2023_CVPR}, it is enough to have $4$ cameras and $13$ world points to achieve finite number of solutions. The problem is then formulated by
\[ \alpha_{ij}\mathbf{l}_{ij} = \mathbf{P}_j\matrix{\XX_i \\ 1}, \quad i = 1,\ldots,13, \;\; j = 1,\ldots 4. \]

We may choose a world coordinate system by fixing (according to~\cite[Section 3.2]{Hruby_2023_CVPR})
\[ \mathbf{P}_1 = \matrix{1 & 0 & 0 & 0 \\ 0 & 1 & 0 & 0}, \mathbf{P}_2 = \matrix{\mathrm{Cay}(x_2,y_2,z_2) & \mathbf{e}_2}. \]
(Here $\mathbf{e}_2 = \matrix{0 & 1}^\top$.) We may eliminate the first two unknowns for every world point $\XX_i$ using the relation
\[ \alpha_{i1}\mathbf{l}_{i1} = \mathbf{P}_1\matrix{\XX_i \\ 1} \iff \alpha_{i1}\mathbf{l}_{i1} = \matrix{X_{i1} \\ X_{i2}} \]
we obtain A formulation with $3584$ complex solutions:
\begin{equation} \label{eq:4v-radial}
        \alpha_{ij}\mathbf{l}_{ij} = \mathbf{P}_j\matrix{\alpha_{i1}\mathbf{l}_{i1} \\ X_i \\ 1}, \quad \forall i = 1,\ldots,13, 
        \, \, 
        \forall j = 2,\ldots,4.
\end{equation}
Here $X_i$ denotes the third coordinate of $\XX_i$. The enormous amount of solutions indicates that the problem has to be checked for decomposability (symmetry existence) in order to design for it more efficient solvers~\cite{Hruby_2023_CVPR}.

The continuous scaling symmetries of the formulation \eqref{eq:4v-radial} are given by:
\[ (\alpha_{ij}, \mathbf{l}_{ij}) \mapsto (\lambda\cdot\alpha_{ij}, \lambda^{-1}\cdot\mathbf{l}_{ij}) \]
where $i=1, \ldots , 13,$ $j = 1,\ldots,4$ and hence $\Gcon \cong (\CC^*)^{52}$. The group of discrete scalings is isomorphic to $\ZZ_2^2$. As explained in~\cite{Hruby_2023_CVPR}, exploiting these symmetries and further structure of the branched cover allows for a more practical solution than naively tracking 3584 homotopy paths.
The group of deck transformations of this problem is isomorphic to 
\[ \ZZ_2 \times \ZZ_2 \times \ZZ_2 \times \ZZ_2 \]
By running \Cref{alg:graded-deck-interpolation} we were able to recover all 16 elements of $\deck (f)$.
\begin{table}[H]
    \footnotesize
    \centering
    \begin{tabular}{l c c} \toprule
         & Quasi-hom & Quasi-hom \\
         & $\&$ & $\&$ \\
         & Param-indep & Param-dep \\
        \midrule
        Number of solutions & 3584 & 3584 \\
        Degree bound $D^*$ & 2 & 2 \\
        Monodromy time & 1h & 1h \\
        Tracking time & 45min & 60min \\
        Interpolation time & 15sec & 20sec \\
        Largest Vandermonde matrix & $326 \times 326$ & $430 \times 430$  \\
        Number of tracked paths & $16 \times 326$ & $16 \times 430$ \\
        $\Psi_1,\ldots,\Psi_4$ fully recovered? & yes & yes \\
        \bottomrule
    \end{tabular}
    \caption{Timings/details for recovering deck transformations of the radial relative pose problem.}
    \label{tab:4v-radial}
\end{table}
The details of this experiment are summarized in~\Cref{tab:4v-radial}.
As we can see from~\Cref{tab:4v-radial}, running parameter-dependent version of~\Cref{alg:graded-deck-interpolation} (even though every deck transformation is parameter-independent) results into larger Vandermonde matrices, which in turn forces the algorithm to track more paths. We also notice that running monodromy and tracking paths is the bottleneck for this problem, which may be attributed to a large number of variables and solutions.

In the end, we recover the following four generators of $\deck(f)$,
\begin{align*}
\Psi_1( &x_2,x_3,x_4,y_2,y_3,y_4,z_2,z_3,z_4,\t_3,\t_4,\alpha_{i1},\alpha_{i2},\alpha_{i3},\alpha_{i4},X_i) \\
&= \bigg(-x_2,-x_3,-x_4,-y_2,-y_3,-y_4,z_2,z_3,z_4,\alpha_{i1},\alpha_{i2},\alpha_{i3},\alpha_{i4},-X_i\bigg), \\ \Psi_2(&x_2,x_3,x_4,y_2,y_3,y_4,z_2,z_3,z_4,\t_3,\t_4,\alpha_{i1}, \alpha_{i2}, \alpha_{i3}, \alpha_{i4}, X_i)\\ 
&= \left(\frac{y_2}{z_2},x_3,x_4,-\frac{x_2}{z_2},y_3,y_4,-\frac{1}{z_2},z_3,z_4,-\t_3,-\t_4,-\alpha_{i1},\frac{\alpha_{i2}}{z_2^2},-\alpha_{i3},-\alpha_{i4},-X_i\right),\\ \Psi_3( &x_2,x_3,x_4,y_2,y_3,y_4,z_2,z_3,z_4,\t_3,\t_4,\alpha_{i1}, \alpha_{i2}, \alpha_{i3}, \alpha_{i4}, X_i) \\
&= \left(x_2,\frac{y_3}{z_3},x_4,y_2,-\frac{x_3}{z_3},y_4,z_2,-\frac{1}{z_3},z_4,-\t_3,\t_4,\alpha_{i1},\alpha_{i2},-\frac{\alpha_{i3}}{z_3^2},\alpha_{i4},X_i\right), \\
\Psi_4(&x_2,x_3,x_4,y_2,y_3,y_4,z_2,z_3,z_4,\t_3,\t_4,\alpha_{i1}, \alpha_{i2}, \alpha_{i3}, \alpha_{i4}, X_i)\\
 &= \left(x_2,x_3,\frac{y_4}{z_4},y_2,y_3,-\frac{x_4}{z_4},z_2,z_3,-\frac{1}{z_4},\t_3,-\t_4,\alpha_{i1},\alpha_{i2},\alpha_{i3},-\frac{\alpha_{i4}}{z_4^2},X_i\right)
\end{align*}

In~\cite{Hruby_2023_CVPR}, these formulas for $\Psi_i$ had to be worked out carefully by hand.
\Cref{alg:graded-deck-interpolation} furnishes an automatic derivation.

\subsection{Nine-point Four-bar path 
generation}\label{subsec:alts-problem}

We now turn our attention to \emph{Alt's problem}.
This is a classic problem of kinematic synthesis which was first solved using homotopy continuation in work of Morgan, Sommese, and Wampler~\cite{alt-solution}.
Several more recent works have used monodromy to verify their result, eg.~\cite{DBLP:conf/imamr/HauensteinS20,plecnik,julia-example}.
Here we explain how this problem can be modeled using a branched cover, and show how its well-known symmetry group can be recovered in our approach.

The formulation we use follows~\cite{alt-solution}, employing the standard convention of \emph{isotropic coordinates}.
A vector in the plane is represented by two variables $x , \overline{x} \in \CC .$
For the purpose of solving polynomial systems, $x$ and $\overline{x}$ are treated as \emph{independent} complex variables; for any physically meaningful solutions, these coordinates will be related by complex conjugation.
With this convention, angles $T = e^{i \theta }$  are modeled by points on the hyperbola $T \overline{T} =1.$

In~\Cref{fig:alt}, the vectors $x$ and $y$ point from the coupler point $p_0$ to the upper joints of the four bar, and vectors $a$ and $b$ point from $P_0$ towards the ground pivots.
The four-bar mechanism has four revolute joints: two connecting the left ``crank'' and right ``rocker'' bars to the ground pivots, and another two connecting these bars to the base of the coupler triangle.
The motion of the mechanism is induced by rotating the crank bar about its ground pivot.
Atop the coupler triangle sits the coupler point ($\star$), which traces out a curve as the mechanism moves.
Without loss of generality, we may assume $(0,0)$ is a point on this curve.

\begin{figure}
\centering
%\resizebox{\textwidth}{!}{\includegraphics{journal_paper/figs/alt.pdf}}
\includegraphics[scale=0.75]{}
\caption{Nine-point four-bar mechanism synthesis}
\label{fig:alt}
\end{figure}

Alt's problem can be stated as follows: given nine task positions $p_0 = 0, p_1, \ldots , p_8 \in \CC $, determine the mechanism parameters $x,y,a,b$ and angles $Q_j, T_j, S_j$ such that the coupler point moves from $p_0$ to $p_i$ for $i=1, \ldots , 8.$
Here $T_j= e^{i \lambda_j}, S_j = e^{i \mu_j }$ as in~\Cref{fig:alt}, and $Q_j = e^{i \theta_j}$ gives the rotation of the coupler point ($\star$) as it moves from $p_0$ to $p_j.$

Referring to~\Cref{fig:alt}, we may write down for each $j=1,\ldots , 8$ four loop-closure equations,
\begin{equation}\label{eq:loop-closure}
\begin{split}
Q_j (x-a) = T_j x + p_j - a, \\
S_j (y-b) = T_j y + p_j - b,
\end{split}
\end{equation}
and their conjugates.
Consequently, the orientation of the coupler point may be written as a rational function in the mechanism parameters and the other angles,
\begin{equation}\label{eq:Tj-rational}
T_j (a,b,x,y,Q_j, S_j) = (y-x)^{-1} (S_j (y-b) + Q_j (a-x) + b -a).
\end{equation}
The rocker angle $S_j$ is an algebraic function of degree $2$ in the quantities $\xx = (x,\bar{x},y,\bar{y},a,\bar{a},b,\bar{b})$ and the crank angle $Q_j$. 
That is,
\begin{equation}\label{eq:Sj-quadratic}
A(\xx, Q_j) \, S_j^2 + B(\xx, Q_j) S_j + C(\xx ,Q_j)  = 0
\end{equation}
for some $A,B,C\in \QQ [\xx , Q_j].$
We note that for generic, fixed values of mechanism parameters $\xx $, this equation defines an elliptic curve in the affine plane of $(Q_j, S_j) \in \CC^2.$
Since the discriminant of the quadratic~\eqref{eq:Sj-quadratic} is square-free, we may define an irreducible variety 
\begin{align*}
X' = \{ 
(\xx , & \, Q_1, \ldots , S_8) \in \CC^{24}
\mid \eqref{eq:Sj-quadratic},  \, \,  A(\xx , Q_j) \ne 0  \, \text{ hold,} \, \, \, j=1,\ldots , 8 \} \nonumber .
\end{align*}
Using~\eqref{eq:loop-closure}, each coupler point can now be expressed in terms of rational functions on $X',$ say $p_j (\xx , Q_j, S_j),$ and $\bar{p}_j (\xx ,  \bar{Q}_j, \bar{S}_j)$ for the conjugate.
We may then take as an irreducible variety of problem-solution pairs $X \subset \CC^8 \times \CC^{16}$ be the closed image of $X'$ under the map $(\xx , \mathbf{D}, \mathbf{Q}) \mapsto (\xx , \pp (\xx , \mathbf{Q}, \mathbf{S}), \bar{\pp} ( \xx , \mathbf{Q}, \mathbf{S} ))$.
This gives a branched cover $f:X \to \CC^{16}$. 
Although not yet formally proved, there is strong evidence that $\deg (f) = 8652.$
Following the elimination strategy described in~\cite{alt-solution}, we obtain a system of $8$ equations 
\begin{equation}\label{eq:alt-equations}
f_1 ( \xx ; \pp , \bar{\pp} ) = \ldots = f_8 ( \xx ; \pp , \bar{\pp} ) = 0
\end{equation}
that vanishes on $X$ and satisfies~\Cref{assumption:sampling-and-equations}.
With this formulation, we have two parameter-independent deck transformations: a \emph{label-swapping} that exchanges the crank and rocker bars 
\begin{equation}\label{eq:Psi-swap}
\Psi_{\text{swap}} (\xx ) = (y,\bar{y}, x , \bar{x}, b, \bar{b}, a, \bar{a}) ,
\end{equation}
(we omit the dependence of $\Psi$ on $\pp, \bar{\pp}$), 
and the \emph{Roberts cognate} map
\begin{equation}\label{eq:Psi-rob}
\Psi_{\text{Rob}} (\xx ) = \left(\displaystyle\frac{(x-a)y}{x-y}, \displaystyle\frac{(\bar{x}-\bar{a})\bar{y}}{\bar{x}-\bar{y}},  
\displaystyle\frac{bx-ay}{x-y}, \displaystyle\frac{\bar{b}\bar{x}-\bar{a}\bar{y}}{\bar{x}-\bar{y}},
a-x, \bar{a} - \bar{x},  
a, \bar{a}\right).
\end{equation}
We note that extending $\Psi_{\text{Rob}}$ to the eliminated variables $\{ Q_j, S_j, T_j \}$ yields parameter-dependent coordinate functions.

% We ran~\Cref{alg:deck-interpolation} on the formulation \eqref{eq:alt-equations} with the upper bound for the total degree $D^* = 2$. 
% As with the previous experiment, we assumed parameter-independent deck transformations, so that $t = {8+2 \choose 2} = 45$. In total, we sampled $r = \ceil{\frac{2t}{d}} = \ceil{\frac{90}{8652}} = 1$ instance. 
% In this case, due to the relatively large number of solutions, \emph{monodromy} was the bottleneck, taking approximately $15$ minutes.
% The subsequent interpolation tasks took approximately $3$ minutes in total. 
% We were able to interpolate both label-swapping~\eqref{eq:Psi-swap} and Roberts cognates~\eqref{eq:Psi-rob}, as well as the other $3$ nontrivial deck transformations they generate. 
% The bottleneck during the interpolation phase, as expected, is again nullspace computation for $\mathbf{A}_{i_j} \in \CC^{8652\times90}$, $i = 2,\dots,6$, $j = 1,\dots,8$.

The first numerical evidence that $\deg (f) = 8652$ was given in~\cite{alt-solution}.
Later on, the lower bound $\deg (f) \ge 8652$ was certified by Hauenstein and Sottile using Smale's $\alpha$-theory~\cite{alpha-certified}.
A rigorous proof that this bound is tight remains an open problem.
More recently, Sottile and Yahl have posed the problem of determining the Galois/monodromy group of the branched cover $f$~\cite[\S 7.3]{galois-survey}.
From equations~\eqref{eq:alt-equations}, we heuristically computed permutations in $\mon (f)$ using monodromy loops. 
This produced 4 permutations using default settings.
Using~\Cref{prop:centralizer}, we determine that the deck transformation group is isomorphic to $S_3,$ generated by a transposition and 3-cycle corresponding to~\eqref{eq:Psi-swap} and~\eqref{eq:Psi-rob}, respectively.

We remark that, in this formulation of Alt's problem, the methods of~\Cref{sec:symmetries} detect that the group of scaling symmetries $\Gcon \times \Gdis $ is trivial.
Thus,~\Cref{alg:graded-deck-interpolation} yields no improvement over~\Cref{alg:deck-interpolation} in this case.
It would, however, be interesting to investigate these symmetries for other formulations of Alt's problem. 

\begin{table}[H]
    \footnotesize
    \centering
    \begin{tabular}{l c c} \toprule
         & Inhom & Inhom \\
         & $\&$ & $\&$ \\
         & Param-indep & Param-dep \\
        \midrule
        Number of solutions & 8652 & 8652 \\
        Degree bound $D^*$ & 2 & 2 \\
        Monodromy time & 15min & 15min  \\
        Tracking $\&$ Interpolation time &  $<1$sec & 10sec \\
        Largest Vandermonde matrix & $90 \times 90$ & $650 \times 650$ \\
        Number of tracked paths & $6 \times 90$ & $6 \times 650$ \\
        $\Psi_{\text{swap}}, \Psi_{\text{Rob}}$ fully recovered? & yes & yes \\
        \bottomrule
    \end{tabular}
    \caption{Timings/details for recovering deck transformations of the Alt's problem.}
    \label{tab:alt}
\end{table}

As we can see from~\Cref{tab:alt}, running both parameter-independent and parameter-dependent formulations using~\Cref{alg:deck-interpolation} yield similar results, where both successfully interpolate the deck transformations.  For this example, ~\Cref{alg:graded-deck-interpolation} produces the same results as~\Cref{alg:deck-interpolation} and thus is omitted from~\Cref{tab:alt}.  

%\tim{We need a paragraph commenting on~\Cref{tab:alt}.}

% This confirms that these symmetries generate the full deck transformation group of $f.$
% Our attempts to determine the order of the full Galois/monodromy group using the Julia interface to GAP~\cite{GAP4,OSCAR} did not succeed after 2 days of computation.
% However, we were able to easily determine that image of the homomorphism $\operatorname{Mon} (f) \to S_{1442}$ given by the action on the maximal block system was the full-symmetric group.\footnote{The permutations we computed can be viewed here:
% \textcolor{magenta}{\url{https://github.com/vviktorrK/DecomposingPolynomialSystems.jl/blob/main/src/examples/robotics/alt/alt_monodromy.txt}} \tim{fix link}. \viktor{Do we need this?}}
% Based on what we know, it seems plausible that these permutations generate the group $S_3 \wr S_{1442},$ where $S_3 \to S_6$ via the regular representation.
% Is this really the case, and do they generate $\mon (f)$?

\section{Conclusion}\label{sec:conclusion}

In summary, we have proposed a novel method for recovering hidden symmetries of commonly-occuring parametric polynomial systems.
Despite its heuristic nature, our experiments demonstrate that the method is capable of delivering results, even on examples with a relatively larger number of solutions like~\Cref{subsec:radial,subsec:alts-problem}.
Additionally, we have introduced a novel quasi-homgeneous interpolation framework in~\Cref{sec:symmetries}, delivering much-improved results compared to the baselines in our conference paper~\cite{us_ISSAC23}. 

One obvious avenue for further research is to test more examples and develop better heuristics.
It would be highly of interest to develop more robust numerical interpolation methods, eg.~by exploiting bases other than the standard monomials, and test them on examples similar to those in~\Cref{sec:ex}.
There is also potential for fruitful contact with more traditional methods of symbolic computation.
In addition to our comments in~\Cref{sec:previous-works}, we point out that some hybrid symbolic-numerical methods may be useful in practice.
For instance, it seems plausible that one could (1) run~\Cref{alg:deck-interpolation} or~\ref{alg:graded-deck-interpolation} until recovering coordinate functions for the deck transformations on some subset of variables $\yy \subset \xx $, then (2) eliminate the remaining variables $\xx \setminus \yy $ and use parametric Gr\"{o}bner bases to solve for their coordinate functions using the interpolated expressions from step (1).
Such hybrid methods might also be useful for recovering deck transformations when a decomposition as in~\Cref{def:decomposable-branched-cover} is already known, or vice-versa.

As noted in~\Cref{sec:background}, decomposability and the existence deck transformations are closely related, but not equivalent.
Indeed, the radial camera relative pose problem of~\Cref{subsec:radial} has a deck transformation group of order $16,$ which implies the degree $3584$ branched cover associated to this problem decomposes as a composition of covers of degree $16$ and $224.$
The latter cover, despite not having any deck transformations, does in fact decompose further into covers of degree $4,2$ and $28.$
Developing numerical methods for determining the maps and intermediate varieties appearing in such a decomposition is a highly appealing next step.

\section*{Acknowledgements}

\noindent
T.~Duff acknowledges support from NSF DMS-2103310. V. Korotynskiy and T. Pajdla acknowledge support from EU H2020 No. 871245 SPRING project. V. Korotynskiy was partially supported by the Grant Agency of CTU in Prague project SGS23/056/OHK3/1T/13.
We thank Taylor Brysiewicz for helpful conversations that got us up to speed on Julia package development.

%% The Appendices part is started with the command \appendix;
%% appendix sections are then done as normal sections
%% \appendix

%% \section{}
%% \label{}

%% If you have bibdatabase file and want bibtex to generate the
%% bibitems, please use
%%
\bibliographystyle{elsarticle-num-names}\biboptions{sort} 
\bibliography{main}

%% else use the following coding to input the bibitems directly in the
%% TeX file.

%\begin{thebibliography}{00}

%% \bibitem{label}
%% Text of bibliographic item

%\bibitem{}

%\end{thebibliography}
\end{document}